\newcommand{\PP}{\mathbb{P}}
\newcommand{\EE}{\mathbb{E}}
\theoremstyle{plain}
\newtheorem{theorem}{Theorem}[section]
\newtheorem{proposition}[theorem]{Proposition}
\newtheorem{corollary}[theorem]{Corollary}
\newtheorem{lemma}[theorem]{Lemma}
\newtheorem{remark}[theorem]{Remark}
\theoremstyle{definition}
\begin{document}

\begin{frontmatter}
\title{The Frog Model on $\mathbb{Z}$ with Discrete Weibull Lifetimes and Random Parameter $p$}
\runtitle{Frog Model with Discrete Weibull Lifetimes and Random $p$}


\begin{aug}
\author[A]{\fnms{J. Hermenegildo}~\snm{R. González} \thanks{[\textbf{Corresponding author indication should be put in the Acknowledgment section if necessary.}]}\ead[label=e1]{hermenegildo.ramirez@usp.br}\orcid{0000-0001-5610-4712}},
\author[A]{\fnms{Gustavo}~\snm{O. Carvalho}\ead[label=e2]{gustavoodc@ime.usp.br}\orcid{0000-0002-5047-5544}}
\and
\author[A]{\fnms{Fábio}~\snm{P. Machado}\ead[label=e3]{fmachado@ime.usp.br}\orcid{0000-0003-3053-4372}}

\address[A]{Universidade de São Paulo\printead[presep={,\ }]{e1,e2,e3}}
\end{aug}

\begin{abstract}
We study the frog model on $\mathbb{Z}$ with particle–wise \emph{discrete Weibull} lifetimes. Each particle has an i.i.d.\ survival parameter $\pi\in(0,1)$; conditionally on $\pi=p$, its lifetime $\Xi$ satisfies
\[
\PP(\Xi\ge k\mid \pi=p)=p^{k^{\gamma}},\qquad k\in\mathbb{N}_0,\ \gamma>0.
\]
The law of $\pi$ has right–edge density
\[
f_\pi(u)\ \sim\ (1-u)^{\beta-1}\,L\!\big((1-u)^{-1}\big)\qquad (u\uparrow 1),
\]
with $\beta>0$ and $L$ slowly varying; let $\eta$ denote the common law of the i.i.d.\ initial occupation numbers $\{\eta_x\}_{x\in\mathbb{Z}}$. The survival–parameter distribution strictly extends the Beta family, while the lifetime distribution extends the geometric case.  
We prove a sharp extinction–survival dichotomy with the $\gamma$–dependent threshold
\[
\beta_c:=\frac{1}{2\gamma}.
\]
If $\beta>\beta_c$ and $E(\eta)<\infty$, the process becomes extinct almost surely; if $\beta<\beta_c$ and $\PP(\eta=0)<1$, it survives with positive probability. At the boundary $\beta=\beta_c$ we provide explicit criteria in terms of $\limsup/\liminf$ of $L(n^{2\gamma})$. The case $\gamma=1$ (geometric lifetimes) recovers the benchmark $\beta_c=\tfrac12$ and the critical refinements previously obtained for random geometric lifetimes.

\end{abstract}

\begin{keyword}[class=MSC2020]
\kwd[Primary ]{60K35}
\kwd[; secondary ]{82C22}
\kwd{05C81}
\end{keyword}

\begin{keyword}
\kwd{frog model}
\kwd{interacting particle systems}
\kwd{discrete Weibull lifetimes}
\kwd{random survival parameter}
\kwd{extinction--survival phase transition}
\end{keyword}

\end{frontmatter}

\section{Introduction and results}

Interacting particle systems that combine random motion with local activation rules—most prominently the \emph{frog model}—offer a parsimonious framework to probe survival/extinction phenomena and front propagation in disordered media. Since its introduction by Alves, Machado and Popov \cite{Alves2002}, the model has revealed genuine phase transitions on trees and higher–dimensional lattices \cite{Fontes2004,Lebensztayn2005,Gallo2023}, with sharp or near–sharp criteria depending on geometry and initial densities \cite{Lebensztayn2005,Lebensztayn2019,Lebensztayn2020}. On $\mathbb{Z}$, however, geometric lifetimes with a \emph{fixed} survival parameter $p$ often entail almost–sure extinction under mild assumptions, unless additional structure (e.g.\ drift or spatial heterogeneity) is introduced \cite{Bertacchi2014,Gantert2009}. A key step forward was obtained by Carvalho and Machado \cite{CarvalhoMachado2025}, who considered i.i.d.\ \emph{random survival parameters} $\pi\in(0,1)$ and, for geometric lifetimes, identified a sharp \emph{edge} threshold at $\beta=\tfrac12$ driven by the mass of $\pi$ near $1$. A subsequent paper extends this threshold to broad classes of right–edge densities with slowly varying corrections; see \cite{JGF}.

In the present paper we keep the same one--dimensional frog model with random survival parameter $\pi$, but we replace geometric lifetimes by discrete Weibull lifetimes. Here $\Xi$ denotes the lifetime of each active frog, i.e., the number of steps it performs after activation before it dies. Conditionally on $\pi=p$, we assume
\[
\mathbb{P}(\Xi \ge k \mid \pi=p)=p^{k^\gamma}, \qquad k\in\mathbb{N},\ \gamma>0.
\]
Under a standard regularity assumption on the distribution of $\pi$ near its right endpoint $p=1$, we obtain a sharp extinction--survival transition at the critical exponent $\beta_c=1/(2\gamma)$: extinction holds for $\beta>\beta_c$, survival holds for $\beta<\beta_c$, and the boundary case $\beta=\beta_c$ depends on the slowly varying factor. In particular, when $\gamma=1$ we recover the geometric case and the known threshold $\beta_c=\tfrac12$.

\medskip
\subsection*{Motivation: from geometric to discrete Weibull lifetimes}

\emph{Why move beyond geometric lifetimes?} Geometric lifetimes are memoryless: the discrete hazard \(h_k:=\PP(\Xi=k\,|\,\Xi\ge k)\) is constant in~$k$, so past survival contains no information about future risk. In many contexts—reliability, epidemiological clearance, device aging—risk is age–dependent, exhibiting either \emph{burn–in} (decreasing hazard after early failures) or \emph{wear–out} (increasing hazard with age); see the classical reliability framework of Barlow–Proschan \cite{BarlowProschan1975}.

To capture both behaviors in discrete time with a single, tractable family, we adopt the type–I \emph{discrete Weibull} distribution \cite{NakagawaOsaki1975} with paremeter \(\gamma>0\), whose survival function is given by 
\begin{equation}\label{eq:dw-survival-intro}
S(k)\;=\;\PP(\Xi\ge k\,|\,\pi=p)\;=\;p^{\,k^{\gamma}},\qquad k\in\mathbb{N}_0,\ \ 0<p<1,\ \ \gamma>0,
\end{equation}
which is the integer–time analogue of the continuous Weibull tail $e^{-\lambda t^{\gamma}}$ (via $p=e^{-\lambda}$). The shape parameter $\gamma$ is the single knob for aging:
\[
h_k\ =\ 1-\frac{S(k+1)}{S(k)}\ =\ 1-p^{(k+1)^\gamma-k^\gamma},
\]
so the hazard is increasing when $\gamma>1$ (wear–out), constant when $\gamma=1$ (memoryless, i.e., geometric), and decreasing when $0<\gamma<1$ (burn–in). Thus \eqref{eq:dw-survival-intro} preserves analytical simplicity while aligning the model with the two canonical hazard regimes.

Figure~\ref{fig:weibull-tail} depicts the discrete Weibull tail $k\mapsto p^{k^{\gamma}}$ for representative $(p,\gamma)$: for $\gamma>1$ long lifetimes are strongly suppressed relative to the geometric case, while for $0<\gamma<1$ the tail is heavier. 

\begin{figure}[h!]
  \centering
  \makebox[\textwidth][c]{%
    \includegraphics[width=1.2\linewidth,height=0.4\textheight,keepaspectratio=false]{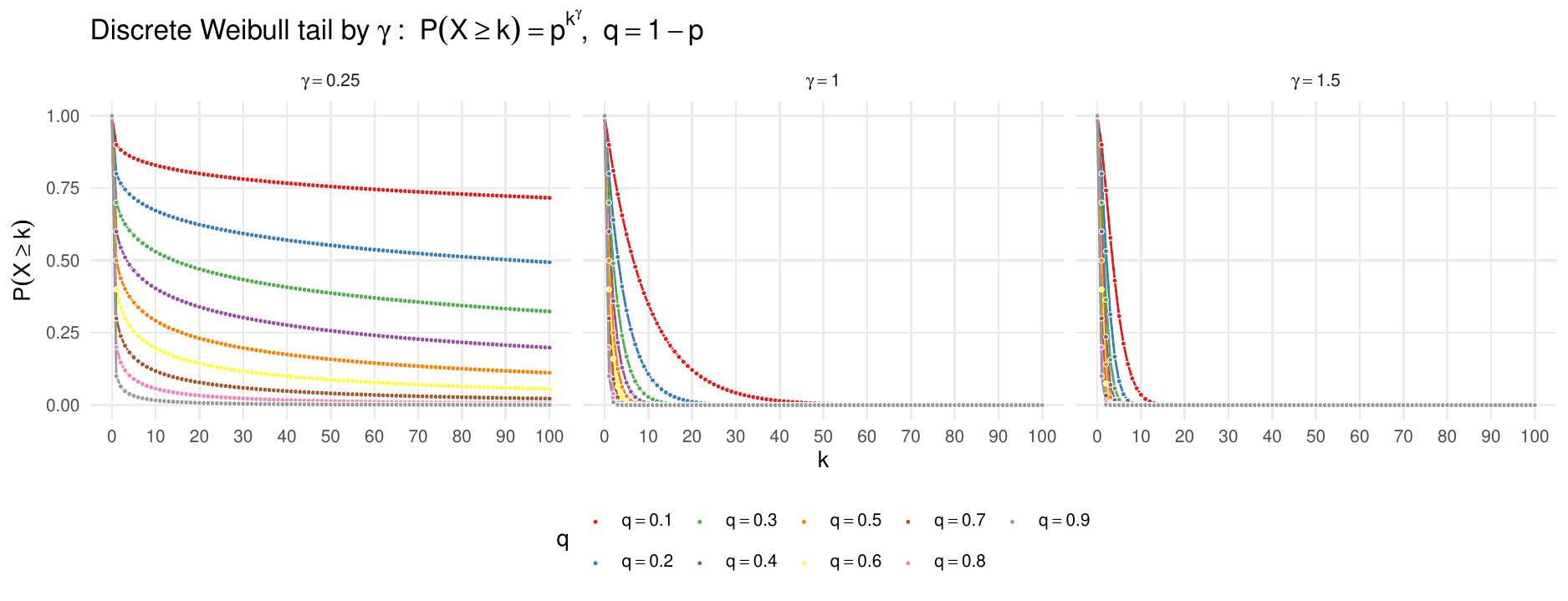}%
  }
  \caption{Survival tails of the discrete Weibull, $\PP(\Xi\ge k)=p^{k^{\gamma}}$, for several values of $p$ and $\gamma$. For $\gamma>1$ the tail decays faster than geometric (wear--out); $\gamma=1$ recovers the geometric (memoryless) case; and $0<\gamma<1$ yields heavier tails (burn--in).}
  \label{fig:weibull-tail}
\end{figure}

As in \cite{JGF}, we quantify the availability of almost–immortal particles via the asymptotic behavior
\begin{equation}\label{eq:edge-behavior}
f_\pi(u)\ \sim\ (1-u)^{\beta-1}\,L\!\Big(\frac{1}{1-u}\Big)\qquad (u\uparrow 1),
\end{equation}
with $\beta>0$ and $L$ slowly varying at $\infty$. This class contains the Beta family and heterogeneous right edges modulated by $L$. An extensive list of examples satisfying \eqref{eq:edge-behavior} is given in \cite{JGF}. Intuitively, a smaller $\beta$ corresponds to a thicker right edge and hence to more long–lived particles.

The choice \eqref{eq:dw-survival-intro} reconciles discrete-time dynamics with the two canonical aging regimes while keeping the model analytically tractable. For $\gamma>1$ the tail $k\mapsto p^{k^\gamma}$ decays faster than the geometric tail (reflecting wear–out), for $0<\gamma<1$ it decays more slowly in a stretched–exponential fashion (reflecting burn–in), and $\gamma=1$ recovers the memoryless geometric case. When, in addition, the survival parameter has a density satisfying the right–edge condition \eqref{eq:edge-behavior}, the mass near $\pi\simeq1$—encoded by $(\beta,L)$—interacts with the random-walk hitting-time distribution in a way made precise by our one-particle estimates (Theorems~\ref{thm:gamma>1-final-EN-eps}–\ref{thm:gamma<1-final-EN-eps}). As a consequence, the extinction–survival criterion exhibits the $\gamma$–dependent critical curve
\[
\beta_c\;:=\;\frac{1}{2\gamma}.
\]
In particular, for $E(\eta)<\infty$ there is extinction when $\beta>\beta_c$, and for $\PP(\eta=0)<1$ there is survival with positive probability when $\beta<\beta_c$, with critical envelope conditions spelled out at $\beta=\beta_c$. The case $\gamma=1$ recovers the geometric benchmark $\beta_c=\tfrac12$ from \cite{CarvalhoMachado2025} and its regularly varying extensions in \cite{JGF}.

Figure~\ref{fig:diagrama_Weibull} plots the critical curve $\gamma_c(\beta)=1/(2\beta)$ in the $(\beta,\gamma)$–plane, separating almost–sure extinction from survival with positive probability, as formalized in Theorem~\ref{thm:general-gamma}.

\begin{figure}[h!]
  \centering
  \makebox[\textwidth][c]{%
    \includegraphics[width=1.2\linewidth,height=0.4\textheight,keepaspectratio=false]{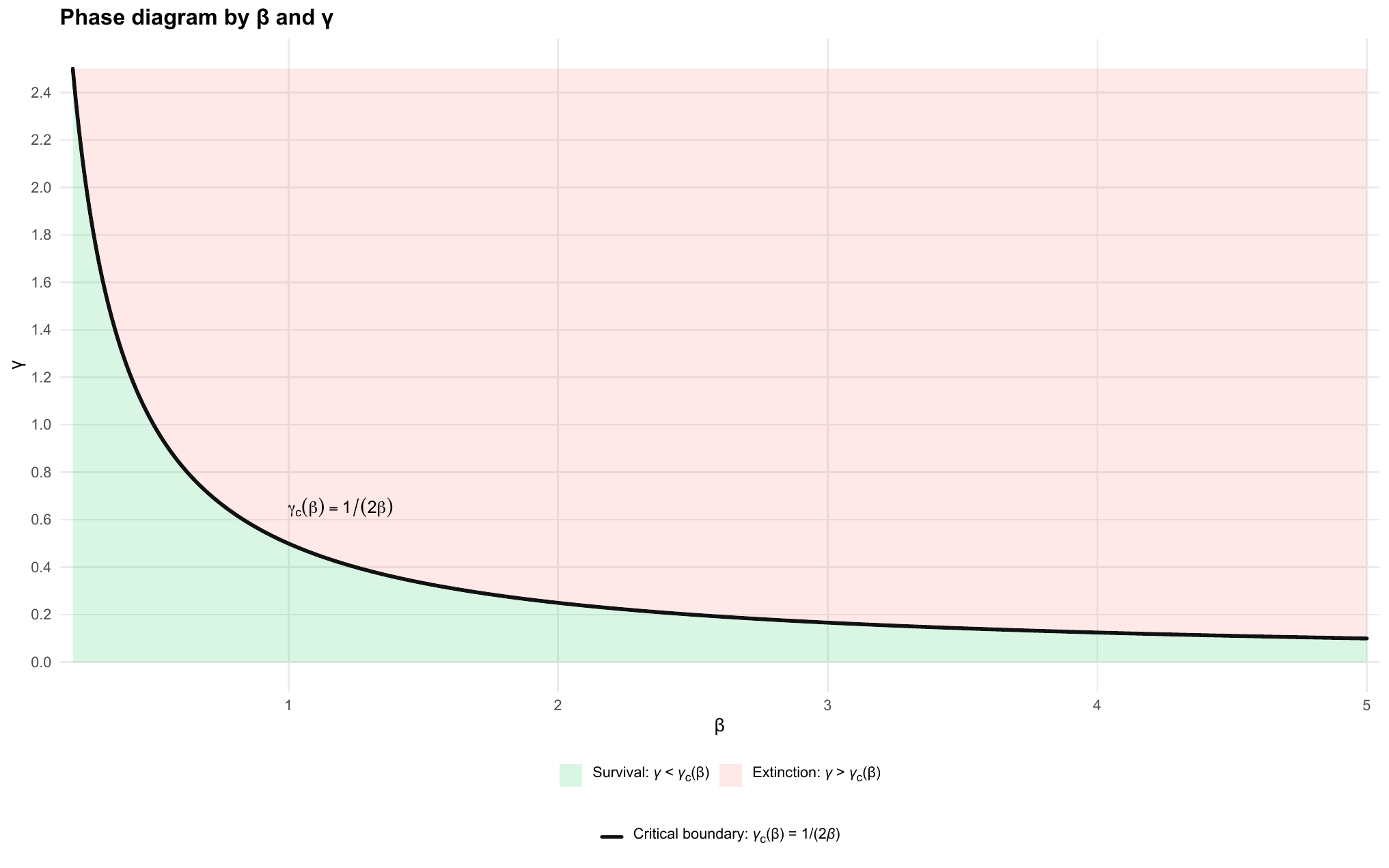}%
  }
  \caption{Phase diagram in the $(\beta,\gamma)$ plane. The critical curve $\gamma_c(\beta)=1/(2\beta)$ separates survival with positive probability (below) from extinction.}
  \label{fig:diagrama_Weibull}
\end{figure}

\subsection*{Model and notation on $\mathbb{Z}$}
Let $\mathbb{N}=\{1,2,3,\dots\}$ and $\mathbb{N}_0=\mathbb{N}\cup\{0\}$. For each $x\in\mathbb{Z}$, let $\eta_x\in\mathbb{N}_0$ be the initial number of sleeping particles at $x$. Particles are indexed by $(x,i)$ with $1\le i\le \eta_x$. The families
\[
\{\eta_x\}_{x\in\mathbb{Z}},\quad \{\pi_{x,i}\}_{x\in\mathbb{Z},\,i\in\mathbb{N}},\quad
\{(S^{x,i}_n)_{n\in\mathbb{N}_0}\}_{x\in\mathbb{Z},\,i\in\mathbb{N}}
\]
are mutually independent, where $(S^{x,i}_n)$ is a simple symmetric random walk with $S^{x,i}_0=x$, and $\pi_{x,i}\in(0,1)$ are i.i.d.\ with law $\pi$ (density $f_\pi$ when it exists). Given $\pi_{x,i}=p$ and $\gamma>0$, the lifetime $\Xi_{x,i}\in\mathbb{N}_0$ has \emph{discrete Weibull tail}
\begin{equation}\label{eq:dw-survival-model}
\PP(\Xi_{x,i}\ge k\mid \pi_{x,i}=p)\;=\;p^{\,k^{\gamma}},\qquad k\in\mathbb{N}_0.
\end{equation}
Conditional on $\{\pi_{x,i}\}_{x\in\mathbb{Z},\,i\in\mathbb{N}}$, the lifetimes $\{\Xi_{x,i}\}_{x\in\mathbb{Z},\,i\in\mathbb{N}}$ are independent of \(
\{\eta_x\}_{x\in\mathbb{Z}}\) and \(\{(S^{x,i}_n)_{n\in\mathbb{N}_0}\}_{x\in\mathbb{Z},\,i\in\mathbb{N}}\) (and of each other).

At time $0$ all particles at the origin are \emph{active} and all others are \emph{inactive}. For every $(x,i)\in \mathbb{Z}\times\mathbb{N}$, the $i$-th particle of vertex $x\in\mathbb{Z}$ (considering $i\le \eta_x$), if activated at some instant $t_{x,i}$, is considered \emph{alive} at any instant $t\le t_{x,i}+\Xi_{x,i}$ and \emph{dead} at any $t>t_{x,i}+\Xi_{x,i}$. At every instant of time $t\in\mathbb{N}$, each active and alive particle takes one step of its simple symmetric random walk on $\mathbb{Z}$. Whenever an active particle visits a site, all particles
present there become active and evolve independently by the same rules. We denote this system by $\mathrm{FM}(\mathbb{Z},\pi,\eta,\gamma)$, where $\pi$ is the common law of the survival parameters, $\eta$ is the law of the initial number of particles per vertex, and $\gamma>0$ is the parameter of the discrete Weibull distribution.

\subsection*{Main results}

Our main result is related to the survival of the frog model. We say that the frog model \emph{survives} if, at every time instant $t>0$, there is at least one active and alive particle. Otherwise, we say that the model dies out.

We are now ready to state the main theorem of this paper.

\begin{theorem}[General extinction–survival dichotomy]\label{thm:general-gamma}
Assume \eqref{eq:edge-behavior} and that the survival-parameter law $\pi$ has density satisfying, as $u\uparrow 1$,
\[
f_\pi(u)\ \sim\ (1-u)^{\beta-1}\,L\!\Big(\frac{1}{1-u}\Big),
\]
with $\beta>0$, and $L$ slowly varying at $\infty$. Fix $\gamma>0$ and set $\beta_c=\tfrac{1}{2\gamma}$.

\medskip
\noindent\textbf{(A) Case $\gamma>1$.} Let \(c_0>0\). Define
\[
K^{\uparrow}_{\gamma,\beta}\ :=\ 2\gamma\ \Gamma(2\beta\gamma)\,2^{\beta(1-\gamma)}
\Bigg(\frac{\Gamma\!\bigl(1-\tfrac{1}{2\gamma}\bigr)}{\sqrt{\pi}}\Bigg)^{-2\gamma\beta},
\,
K^{\downarrow}_{\gamma,\beta}(c_0)\ :=\ 2\gamma\ \theta(c_0)\!\int_{0}^{\infty}\! y^{2\gamma\beta-1}e^{-c_0^{\gamma}y^{2\gamma}}\,dy,
\]
where $\theta(c_0):=\tfrac12\bigl(1-\Phi(1/\sqrt{c_0})\bigr)\in(0,1)$ and $\Phi$ is the standard Gaussian cdf.
Then:
\begin{itemize}
\item[(i) \emph{Extinction.}] If $E(\eta)<\infty$ and either $\beta>\beta_c$, or $\beta=\beta_c$ and
\[
2K^{\uparrow}_{\gamma,\beta}\ \limsup_{n\to\infty} L(n^{2\gamma})\ <\ \frac{1}{E(\eta)},
\]
then $\mathbb{P}\big[\mathrm{FM}(\mathbb{Z},\pi,\eta)\text{ survives}\big]=0$.
\item[(ii) \emph{Survival.}] If $\mathbb{P}(\eta=0)<1$, then
$\mathbb{P}\big[\mathrm{FM}(\mathbb{Z},\pi,\eta)\text{ survives}\big]>0$ in either of the following cases:
\begin{itemize}
\item $\beta<\beta_c$;
\item $\beta=\beta_c$ and, for some $c_0>0$,
\[
K^{\downarrow}_{\gamma,\beta}(c_0)\ \liminf_{n\to\infty} L(n^{2\gamma})\ >\ \frac{1}{E(\eta)}
\quad \textup{(with $1/E(\eta):=0$ when $E(\eta)=\infty$)}.
\]
\end{itemize}
\end{itemize}

\medskip
\noindent\textbf{(B) Case $\gamma<1$.} Define
\[
K^{\uparrow}_{\gamma,\beta}\ :=\ 2\gamma\ \Gamma(2\beta\gamma)\,2^{-\beta\gamma}\,
\frac{\Gamma(\beta)}{\gamma\,\Gamma(\beta\gamma)},
\qquad
K^{\downarrow}_{\gamma,\beta}\ :=\ 2\gamma\ \Gamma(2\beta\gamma)\,2^{-\beta(1+\gamma)}\,
\frac{\Gamma(\beta)}{\gamma\,\Gamma(\beta\gamma)}.
\]
Then:
\begin{itemize}
\item[(i) \emph{Extinction.}] If $E(\eta)<\infty$ and either $\beta>\beta_c$, or $\beta=\beta_c$ and
\[
2K^{\uparrow}_{\gamma,\beta}\ \limsup_{n\to\infty} L(n^{2\gamma})\ <\ \frac{1}{E(\eta)},
\]
then $\mathbb{P}\big[\mathrm{FM}(\mathbb{Z},\pi,\eta)\text{ survives}\big]=0$.
\item[(ii) \emph{Survival.}] If $\mathbb{P}(\eta=0)<1$, then
$\mathbb{P}\big[\mathrm{FM}(\mathbb{Z},\pi,\eta)\text{ survives}\big]>0$ in either of the following cases:
\begin{itemize}
\item $\beta<\beta_c$;
\item $\beta=\beta_c$ and
\[
K^{\downarrow}_{\gamma,\beta}\ \liminf_{n\to\infty} L(n^{2\gamma})\ >\ \frac{1}{E(\eta)}
\quad \textup{(with $1/E(\eta):=0$ when $E(\eta)=\infty$)}.
\]
\end{itemize}
\end{itemize}

\end{theorem}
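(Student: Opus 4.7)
The plan is to reduce the full dichotomy to the asymptotics of a single scalar,
\[
p_n \;:=\; \PP(T_n \le \Xi) \;=\; \EE\!\bigl[\pi^{T_n^{\gamma}}\bigr],
\]
where $T_n$ is the first hitting time of $0$ by a simple symmetric random walk started at $n$, and then to feed those asymptotics into standard Borel--Cantelli and oriented--percolation comparisons for the frog model on $\mathbb{Z}$. The central one--particle estimate, which should be obtained as a separate proposition, is that $p_n \sim K(\gamma,\beta)\,L(n^{2\gamma})/n^{2\gamma\beta}$ as $n\to\infty$, with matching one--sided constants $K^{\uparrow}_{\gamma,\beta}$ (upper) and $K^{\downarrow}_{\gamma,\beta}$ (lower). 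I would derive this by writing $\pi=1-s$, using $f_{\pi}(1-s)\sim s^{\beta-1}L(1/s)$, replacing $\pi^{T_n^{\gamma}}$ by $e^{-sT_n^{\gamma}}$ for small $s$, and rescaling $s=t/n^{2\gamma}$. The stable--$(1/2)$ scaling of $T_n/n^{2}$ then produces a limiting Laplace integral against the one--sided stable density; the split into $\gamma>1$ and $\gamma<1$ reflects whether the integrand concentrates on the Gaussian lower tail of $T_n$ (where $T_n\asymp n^{2}$ and the factor $\theta(c_0)$ arises from conditioning on this tail) or on the heavy upper tail, which accounts for the different explicit forms of $K^{\uparrow},K^{\downarrow}$ in parts (A) and (B).

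For the extinction direction I would dominate the process by the independent--particles system in which every particle starts its walk at time $0$ ignoring activation, and bound the expected number of right--side particles hitting the origin by $\EE[\eta]\sum_{n\ge 1}p_n$. Under $\beta>\beta_c$ the asymptotic above gives $\sum p_n <\infty$; by Borel--Cantelli only finitely many ideal right--side particles hit $0$, and since the activated set of the frog model is stochastically contained in the traces of these visitors, a symmetric argument on the left yields global extinction. At $\beta=\beta_c$ the series is typically divergent, so I would instead read the hypothesis as a subcritical effective branching mean: on the dyadic block $[2^{k},2^{k+1}]$, the expected number of new activations is controlled by $2 K^{\uparrow}_{\gamma,\beta}\,\EE[\eta]\,\limsup_n L(n^{2\gamma})$, which under the stated condition is eventually strictly less than $1$, killing the branching chain.

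For the survival direction I would build an oriented--percolation/Galton--Watson lower bound along dyadic scales $\{2^{k}\}_{k\ge k_{0}}$: conditional on the activation history up to scale $k$, the event that some activated particle in $[2^{k},2^{k+1}]$ propagates a further activation to $[2^{k+1},2^{k+2}]$ has conditional probability bounded below by a constant multiple of $\EE[\eta]\,K^{\downarrow}_{\gamma,\beta}\,L(2^{2\gamma k})/2^{k(2\gamma\beta-1)}$. For $\beta<\beta_c$ this lower bound diverges, so essentially every scale is successful and survival is immediate. At $\beta=\beta_c$ the bound is uniformly comparable to $\EE[\eta]\,K^{\downarrow}_{\gamma,\beta}\,\liminf_n L(n^{2\gamma})$, and the hypothesis ensures supercritical branching on infinitely many scales, giving survival with positive probability via a finite--range--of--dependence decoupling. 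The main obstacle is precisely this survival step: the one--particle estimate controls only a single isolated walk, but the frog model is interacting, and the propagation events at different scales share particles and initial configurations; lifting the lower bound to a genuine survival statement requires a renormalization/decoupling construction that handles these dependencies while keeping the effective branching mean matched to the sharp constants $K^{\uparrow}/K^{\downarrow}$ at criticality.
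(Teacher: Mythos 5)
Your one--particle asymptotics, the Laplace--transform/stable--scaling route to $K^{\uparrow},K^{\downarrow}$, and the split into $\gamma>1$ (Gaussian lower tail, giving the $\theta(c_0)$ factor) versus $\gamma<1$ (heavy upper tail, via the Bernstein representation) are all consistent with what the paper does in Propositions~\ref{prop:upper-gamma>1-EN-eps}--\ref{prop:lower-gamma>1-EN-eps} and Theorems~\ref{thm:gamma>1-final-EN-eps}--\ref{thm:gamma<1-final-EN-eps}. Where you diverge is in the \emph{reduction step}: the paper's proof of Theorem~\ref{thm:general-gamma} is a one--liner, applying Proposition~1.2 of \cite{CarvalhoMachado2025} as a black box that converts $\limsup_n n\,\PP(D^{\rightarrow}\ge n)<\tfrac{1}{2E(\eta)}$ into almost--sure extinction and $\liminf_n n\,\PP(D^{\rightarrow}\ge n)>\tfrac{1}{E(\eta)}$ into survival with positive probability. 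You instead attempt to re--derive that reduction from first principles, and this is where the proposal has a genuine gap that you yourself flag.

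Concretely, your extinction argument off criticality (``$\sum p_n<\infty$ by Borel--Cantelli, so only finitely many ideal right--side particles hit $0$'') does not by itself kill the frog model: even if the expected number of direct visits to $0$ from a fixed initial configuration is finite, the frog dynamics keeps producing new activations, and ruling out an infinite activation chain requires a comparison with a subcritical branching or renewal structure, not just a first Borel--Cantelli lemma on ``ideal'' particles. At criticality your ``subcritical effective branching mean on dyadic blocks'' is a heuristic, not an argument, and the factor~$2$ in the criterion (two--sided propagation) is exactly the kind of bookkeeping that has to be done carefully there. On the survival side you correctly identify the real obstruction — the dyadic propagation events share particles and initial configurations, so a genuine renormalization/decoupling construction is needed — but you do not supply it. That construction is precisely the content of the cited proposition in \cite{CarvalhoMachado2025}; re--deriving it is a substantial piece of work in its own right and is not attempted in this paper. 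So the proposal identifies the right ingredients but, as written, leaves the frog--model reduction (both extinction at criticality and the entire survival direction) unproved, whereas the paper closes the argument by invoking the known criterion and plugging in the two--sided asymptotics \eqref{lim_gamma>1}--\eqref{lim_gamma_1}.
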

The Corollary \ref{cor:general-gamma-envelope} follows immediately from the proofs of Theorems \ref{thm:gamma>1-final-EN-eps}, \ref{thm:gamma<1-final-EN-eps}, and \ref{thm:general-gamma}.

\begin{corollary}[One-sided envelope criteria]\label{cor:general-gamma-envelope}
Assume that there exist $U\in(0,1)$, $\beta>0$ and a function $L$ slowly varying at $+\infty$ such that, for all $u\ge U$,
either
\[
  f_\pi(u)\ \le\ (1-u)^{\beta-1}\,L\!\Big(\frac{1}{1-u}\Big)
  \qquad\textup{(upper envelope)}
\]
or
\[
  f_\pi(u)\ \ge\ (1-u)^{\beta-1}\,L\!\Big(\frac{1}{1-u}\Big)
  \qquad\textup{(lower envelope)}.
\]
Fix $\gamma>0$ and set $\beta_c=1/(2\gamma)$.

\begin{itemize}
\item If the \emph{upper} envelope holds and $E(\eta)<\infty$, then all extinction conclusions of
Theorem~\ref{thm:general-gamma} remain valid with the same critical parameter $\beta_c$ and the same constants
$K^{\uparrow}_{\gamma,\beta}$ and $K^{\downarrow}_{\gamma,\beta}$ (or $K^{\downarrow}_{\gamma,\beta}(c_0)$ when $\gamma>1$).
In particular:
  \begin{itemize}
  \item for $\gamma>1$, Theorem~\ref{thm:general-gamma}\textup{(A)(i)} yields extinction;
  \item for $\gamma<1$, Theorem~\ref{thm:general-gamma}\textup{(B)(i)} yields extinction.
  \end{itemize}

\item If the \emph{lower} envelope holds and $\PP(\eta=0)<1$, then all survival conclusions of
Theorem~\ref{thm:general-gamma} remain valid with the same constants. In particular:
  \begin{itemize}
  \item for $\gamma>1$, Theorem~\ref{thm:general-gamma}\textup{(A)(ii)} gives survival with positive probability;
  \item for $\gamma<1$, Theorem~\ref{thm:general-gamma}\textup{(B)(ii)} gives survival with positive probability.
  \end{itemize}
\end{itemize}
\end{corollary}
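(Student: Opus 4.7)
The plan is to trace through the proofs of Theorems~\ref{thm:gamma>1-final-EN-eps}, \ref{thm:gamma<1-final-EN-eps} and~\ref{thm:general-gamma} and observe that the two-sided asymptotic \eqref{eq:edge-behavior} is invoked only in a one-sided way in each direction. Schematically, the critical estimates reduce to bounding an edge integral of the form
\[
I_n \;:=\; \int_0^1 g_n(p)\,f_\pi(p)\,dp,
\]
where $g_n(p)$ encodes the one-particle activation/survival probability at scale $n$ (i.e.\ a quantity of the type $\mathbb{E}_p[p^{T_n}]$ for an appropriate random-walk functional $T_n$). After the change of variable $p=1-y/n^{2\gamma}$ (or the analogous window in the $\gamma>1$ regime, in which the parameter $c_0$ enters), the integrand concentrates on $p$ arbitrarily close to~$1$, so only the behaviour of $f_\pi$ on some interval $[U,1)$ determines the critical constants $K^{\uparrow}_{\gamma,\beta}$ and $K^{\downarrow}_{\gamma,\beta}$.

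For the extinction direction the argument in Theorems~\ref{thm:gamma>1-final-EN-eps} and~\ref{thm:gamma<1-final-EN-eps} controls $I_n$ from above and combines it with a first-moment/Borel--Cantelli scheme. Under the upper envelope, applying $f_\pi(u)\le(1-u)^{\beta-1}L\bigl(1/(1-u)\bigr)$ on $[U,1)$ reproduces the constant $K^{\uparrow}_{\gamma,\beta}$ with $\limsup_{n}L(n^{2\gamma})$, whereas the contribution of $[0,U]$ is bounded by $\sup_{p\le U}g_n(p)$, which is exponentially small in a power of $n$ and therefore dominated by the critical scale $n^{-2\gamma\beta}L(n^{2\gamma})$ (this uses only that $L$ is slowly varying). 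Consequently the extinction criteria in Theorem~\ref{thm:general-gamma}(A)(i) and (B)(i) go through verbatim, with the same constant $K^{\uparrow}_{\gamma,\beta}$.

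For the survival direction the original proof requires a \emph{lower} bound on $I_n$, which is then fed into a coupling with a supercritical branching-type process. That lower bound is obtained by restricting $I_n$ to the edge window where $g_n$ is of order one; on this window the lower envelope $f_\pi(u)\ge(1-u)^{\beta-1}L\bigl(1/(1-u)\bigr)$ suffices to yield $K^{\downarrow}_{\gamma,\beta}$ (or $K^{\downarrow}_{\gamma,\beta}(c_0)$ when $\gamma>1$, with the factor $\theta(c_0)$ coming from the Gaussian bulk of the random walk), now accompanied by $\liminf_{n}L(n^{2\gamma})$. The main---and really only---technical point is to verify that truncating the integral at $U$ does not alter the leading order; this is immediate from the slow variation of $L$ together with the polynomial scaling of the edge window. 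Combining both halves produces the survival criteria of Theorem~\ref{thm:general-gamma}(A)(ii) and (B)(ii), thereby completing the proof of the corollary.
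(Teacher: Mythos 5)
Your proposal is correct and mirrors the paper's intent: the paper simply asserts that the corollary "follows immediately from the proofs" of Theorems~\ref{thm:gamma>1-final-EN-eps}, \ref{thm:gamma<1-final-EN-eps} and~\ref{thm:general-gamma}, and your argument makes explicit exactly why---the edge asymptotic \eqref{eq:edge-behavior} enters those proofs only as a one-sided bound via \eqref{eq:edge-band} (upper side in Proposition~\ref{prop:upper-gamma>1-EN-eps} and the $\limsup$ half of Theorem~\ref{thm:gamma<1-final-EN-eps}; lower side in Proposition~\ref{prop:lower-gamma>1-EN-eps} and the $\liminf$ half), while the bulk $[0,U]$ contributes only exponentially small terms as in \eqref{limI_1} and \eqref{cotaIn,delta,2}. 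The route and the decomposition are the same as the paper's; no discrepancy.
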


\begin{remark}
Fix $\delta>0$ and define, for $x\ge 1$,
\[
L(x)\ :=\ \frac{\delta}{\bigl(1+\log x\bigr)^{1+\delta}}.
\]
Set, for $u\in[0,1)$,
\[
g(u)\ :=\ \frac{1}{1-u}\,L\!\Big(\frac{1}{1-u}\Big)
\ =\ \delta\,\frac{1}{1-u}\,\frac{1}{\bigl(1+\log \tfrac{1}{1-u}\bigr)^{1+\delta}},
\]
and define $g(1):=0$. Then $g\ge 0$ on $[0,1]$ and $\int_0^1 g(u)\,du=1$, so $g$ is a probability density on $[0,1]$. Moreover, $L$ is slowly varying at $+\infty$.

Perform the change of variables $x=1/(1-u)$, i.e.\ $u=1-1/x$ and $du=x^{-2}\,dx$. Then
\[
\int_0^1 g(u)\,du
\;=\;\int_0^1 \frac{1}{1-u}\,L\!\Big(\frac{1}{1-u}\Big)\,du
\;=\;\int_{1}^{\infty} \frac{L(x)}{x}\,dx
\;=\;\delta\int_{1}^{\infty} \frac{dx}{x\,(1+\log x)^{1+\delta}}.
\]
With $t=1+\log x$ (so $dt=dx/x$), the last integral becomes
\[
\delta\int_{1}^{\infty} t^{-(1+\delta)}\,dt
\;=\;\delta\Big[\,-\frac{1}{\delta}\,t^{-\delta}\,\Big]_{1}^{\infty}
\;=\;1.
\]
Hence $\int_0^1 f=1$. Nonnegativity is immediate since $\delta>0$ and $L\ge 0$.

For any fixed $t>0$,
\[
\frac{L(tx)}{L(x)}
\;=\;\Bigg(\frac{1+\log x}{1+\log(tx)}\Bigg)^{1+\delta}
\;=\;\Bigg(\frac{1+\log x}{1+\log x+\log t}\Bigg)^{1+\delta}
\;=\;\Bigg(1+\frac{\log t}{1+\log x}\Bigg)^{-(1+\delta)}
\xrightarrow[x\to\infty]{}1.
\]
Thus, $L$ is slowly varying at $+\infty$.

Assume that $\pi_{x,i}$ admits a density $g$ and that $\mathbb{P}(\eta=0)<1$. Then, by Corollary~\ref{cor:general-gamma-envelope}, survival holds for every $\gamma>0$.

\end{remark}

\begin{remark}
Assume that there exists $a<1$ such that $\PP(\pi\in[0,a])=1$ and $E(\eta)<\infty$. Then, by Corollary~\ref{cor:general-gamma-envelope}, extinction holds for every $\gamma>0$.
\end{remark}

The remainder of this paper is organized as follows. Section~\ref{sec:one_particle} states several results regarding the displacement of the random walk of a single particle on $\mathbb{Z}$; these estimates are important to help with the proof our main theorem. Section~\ref{results} is devoted to the proofs of our main results.
Section~\ref{sec:gamma>1} treats the case $\gamma>1$ and, combining
Propositions~\ref{prop:upper-gamma>1-EN-eps} and \ref{prop:lower-gamma>1-EN-eps}
with a dominated convergence argument, yields Theorem~\ref{thm:gamma>1-final-EN-eps}.
Section~\ref{sec:gamma<1} deals with the regime $0<\gamma<1$, proving
Theorem~\ref{thm:gamma<1-final-EN-eps} and, as a consequence, the general
extinction–survival dichotomy of Theorem~\ref{thm:general-gamma}.
Appendix~A: Auxiliary Results collects auxiliary estimates for the random walk
and for regularly varying functions that are used throughout these proofs.

\section{One–particle tail asymptotics}\label{sec:one_particle}

Under \eqref{eq:edge-behavior} we derive \emph{two–sided} asymptotics for the one–particle displacement tail. 


For a particle $i$ initially at $z\in\mathbb{Z}$, define the maximal one–sided and two–sided displacements during its lifetime:
\[
D^{\rightarrow}_{z,i}:=\max_{0\le n\le \Xi_{z,i}}\big(S^{z,i}_n-z\big),\,
D^{\leftarrow}_{z,i}:=\max_{0\le n\le \Xi_{z,i}}\big(z-S^{z,i}_n\big),\,
D^{*}_{z,i}:=\max_{0\le n\le \Xi_{z,i}}\big(|S^{z,i}_n-z|\big)=D^{\rightarrow}_{z,i}\vee D^{\leftarrow}_{z,i}.
\]

By homogeneity, write $D^{\rightarrow}$, $D^{\leftarrow}$, and $D^{*}$ for generic copies. By symmetry,
\begin{equation}\label{eq:dstar-half}
\mathbb{P}(D^{\rightarrow}\ge n)=\mathbb{P}(D^{\leftarrow}\ge n)\ \ge\ \tfrac12\,\mathbb{P}(D^{*}\ge n),\qquad n\in\mathbb{N}.
\end{equation}

Since, conditional on the survival parameters $\{\pi_{x,i}\}$, the lifetimes $\{\Xi_{x,i}\}$ are independent of the family of simple symmetric random walks $\{(S^{x,i}_n)_{n\ge 0}\}$, and since $\{\pi_{x,i}\}$ itself is independent of the family of simple symmetric random walks, we have that

\begin{equation}\label{eq:reduce-key-EN-eps}
\begin{aligned}
\PP(D^{\rightarrow}\ge n\mid \pi=p)&= \PP(\tau_n\le \Xi\mid \pi=p)= \sum_{k=0}^{\infty}\PP(\Xi\ge k,\ \tau_n=k\mid \pi=p)\\
&= \sum_{k=0}^{\infty}\PP(\Xi\ge k\mid \pi=p)\,\PP(\tau_n=k)= \sum_{k=0}^{\infty} p^{k^{\gamma}}\,\PP(\tau_n=k)
= \EE\!\left[p^{\tau_n^{\gamma}}\right], \qquad n\in\mathbb{N}.
\end{aligned}
\end{equation}

so the random walk enters only through the law of the hitting time $\tau_n$, while the lifetime acts as an “age filter”.

As noted in \cite{CarvalhoMachado2025} and  \cite{JGF}, survival or extinction of the frog model on $\mathbb{Z}$ depends crucially on the asymptotic behavior of $P(D^\rightarrow \ge n)$. Accordingly, below we establish several important results concerning the one-sided displacements of the particles, of which our main theorem is an almost direct application.

\begin{proposition}[Upper bound \(\gamma > 1\)]\label{prop:upper-gamma>1-EN-eps}
Assume \(\gamma>1\) and \eqref{eq:edge-behavior}. Then, for every $\varepsilon\in(0,(2\gamma\beta)\wedge\tfrac{1}{2})$ and \(C>1\) there exists $N_{\varepsilon,C}$ such that for all $n\ge N_{\varepsilon,C}$,
\begin{equation}\label{eq:upper-gamma>1-EN-eps}
n\,\PP(D^{\rightarrow}\ge n)\ \le\ n^{1-2\gamma\beta}L(n^{2\gamma})D^+_{\gamma,\beta,\varepsilon,C}+n2^{-n^\gamma},
\end{equation}

where 
\[
D^+_{\gamma,\beta,\epsilon,C}:=2\gamma (1+\varepsilon)^{2} c_{\varepsilon}^{-2\gamma\beta}C\int_{0}^{\infty}e^{-y}y^{2\gamma\beta-1}\max\{y^{\varepsilon},y^{-\varepsilon}\}dy
\]

and \(c_{\varepsilon}:=(1-\varepsilon)\Gamma\!\bigl(1-\tfrac{1}{2\gamma}\bigr)\sqrt{\tfrac{2}{\pi}}2^{-1/(2\gamma)}\).

\end{proposition}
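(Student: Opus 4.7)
The plan is to bound $\mathbb{P}(D^\rightarrow \ge n)=\int_0^1 \mathbb{E}[p^{\tau_n^\gamma}]\,f_\pi(p)\,dp$ (from \eqref{eq:reduce-key-EN-eps}) by splitting the $p$-integral into a bulk part and an edge part. On the bulk $p\in[0,\tfrac12]$, the deterministic inequality $\tau_n\ge n$ gives $p^{\tau_n^\gamma}\le 2^{-n^\gamma}$, so this part contributes at most $2^{-n^\gamma}$ to $\mathbb{P}(D^\rightarrow\ge n)$, accounting for the additive term $n\,2^{-n^\gamma}$ in \eqref{eq:upper-gamma>1-EN-eps} after multiplying by $n$. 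On the edge $p\in(\tfrac12,1]$, write $u=1-p$ and invoke the Potter-type upper bound implied by \eqref{eq:edge-behavior}: for every admissible $\varepsilon$ there exists $u_0(\varepsilon)$ with $f_\pi(1-u)\le (1+\varepsilon)\,u^{\beta-1}L(1/u)$ on $(0,u_0]$; the narrow window $u\in(u_0,\tfrac12)$ is exponentially small in $n^\gamma$ and is absorbed either into the bulk error term or by enlarging the Potter constant $C$.

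The crucial technical step is a pointwise upper bound
\[
\mathbb{E}[p^{\tau_n^\gamma}]\;\le\;\exp\!\bigl(-c_\varepsilon\,n\,u^{1/(2\gamma)}\bigr),\qquad u=1-p,
\]
with exactly the constant $c_\varepsilon$ stated in the proposition. To prove it I would start from the summation-by-parts identity
\[
1-\mathbb{E}[p^{\tau_n^\gamma}]\;=\;(1-p)\sum_{k\ge 1}p^{k-1}\,\mathbb{P}\!\bigl(\tau_n\ge \lceil k^{1/\gamma}\rceil\bigr),
\]
plug in a quantitative lower tail estimate $\mathbb{P}(\tau_n\ge r)\ge (1-\varepsilon)\,n\sqrt{2/\pi}\,r^{-1/2}$ valid on the relevant range of $r$ (an auxiliary random-walk lemma to be stated in Appendix~A), approximate the sum by the corresponding integral, use $p^{t-1}\approx e^{-ut}$, and substitute $s=ut$ to collapse the integral to $u^{-1+1/(2\gamma)}\!\int e^{-s}s^{-1/(2\gamma)}\,ds$. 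Extending the range of integration to $(0,\infty)$, with the loss from the finite validity range and from the sum-to-integral passage packaged into the factors $(1-\varepsilon)$ and $2^{-1/(2\gamma)}$, recovers $\Gamma(1-1/(2\gamma))$ as the surviving constant; this produces the linear lower bound $1-\mathbb{E}[p^{\tau_n^\gamma}]\ge c_\varepsilon\,n\,u^{1/(2\gamma)}$, which becomes the desired exponential upper bound via $1-x\le e^{-x}$.

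Finally, I would plug the pointwise bound into the edge integral and perform the change of variables $y=c_\varepsilon\,n\,u^{1/(2\gamma)}$: setting $u=(y/(c_\varepsilon n))^{2\gamma}$ gives $u^{\beta-1}\,du=2\gamma\,(c_\varepsilon n)^{-2\gamma\beta}\,y^{2\gamma\beta-1}\,dy$, producing both the prefactor $2\gamma\,(c_\varepsilon n)^{-2\gamma\beta}$ and the integrand $y^{2\gamma\beta-1}e^{-y}$. Potter's bound for the slowly varying $L$ (Appendix~A) then replaces $L((c_\varepsilon n/y)^{2\gamma})$ by $C\,L(n^{2\gamma})\max\{y^{\varepsilon},y^{-\varepsilon}\}$ for $n$ large; combining with the Potter loss $(1+\varepsilon)$ on $f_\pi$ and the extra $(1+\varepsilon)$ absorbed from the exponential bound gives the factor $(1+\varepsilon)^2$, so that the whole expression matches $n^{-2\gamma\beta}L(n^{2\gamma})\,D^+_{\gamma,\beta,\varepsilon,C}$. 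Multiplying by $n$ and adding the bulk $2^{-n^\gamma}$ contribution then yields \eqref{eq:upper-gamma>1-EN-eps}. The main obstacle is the identification of the constant $c_\varepsilon$: one needs a sharp quantitative lower bound for $\mathbb{P}(\tau_n\ge r)$ on the scale $r\asymp n^2$ together with a careful comparison of the discrete sum to its Gamma-integral limit, so that neither $\Gamma(1-1/(2\gamma))$ nor the refinement $2^{-1/(2\gamma)}$ is corrupted; the remaining steps (splitting, Potter's bounds, change of variables, bulk estimate) are routine bookkeeping.
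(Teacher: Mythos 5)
Your outline matches the paper's overall architecture (split $p$ at the edge, Potter/uniform-convergence bounds on $L$, change of variables $y=c_\varepsilon n h^{1/(2\gamma)}$, bulk controlled by $\tau_n\ge n$), and the bookkeeping at the end is fine. But the central pointwise estimate
\[
\mathbb{E}\!\big[p^{\tau_n^{\gamma}}\big]\;\le\;\exp\!\big(-c_\varepsilon\,n\,u^{1/(2\gamma)}\big)
\]
is \emph{not} obtainable by the route you propose, and this is a genuine gap. You want to establish the linear lower bound $1-\mathbb{E}[p^{\tau_n^{\gamma}}]\ge c_\varepsilon\,n\,u^{1/(2\gamma)}$ and then convert it to an exponential bound. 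But the left-hand side is always at most $1$, so this inequality cannot hold once $c_\varepsilon\,n\,u^{1/(2\gamma)}>1$ — and after your change of variables $y=c_\varepsilon\,n\,u^{1/(2\gamma)}$ you integrate $y$ over essentially all of $(0,\infty)$, i.e.\ over values where the claimed linear bound is vacuously false. Near the relevant edge you get no exponential decay at all from a capped-at-one quantity. The quantitative lower tail $\mathbb{P}(\tau_n\ge r)\ge(1-\varepsilon)n\sqrt{2/\pi}\,r^{-1/2}$, uniformly in $n$ on the relevant range of $r$, is a further unsupported claim you would need, and it fails for $r\lesssim n^2$ where the right side exceeds~$1$.

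What actually produces the exponential decay in the paper — and what is missing from your proposal — is the reduction to a single particle. Using $\tau_n\stackrel{d}{=}\sum_{i=1}^n\tau_1^{i}$ (equation \eqref{sumindp}) together with the super-additivity $(\sum x_i)^\gamma\ge\sum x_i^\gamma$ for $\gamma>1$ (Lemma~\ref{lem:convex-ineq-EN-eps}) gives
\[
\mathbb{E}\!\big[e^{-a\tau_n^{\gamma}}\big]\ \le\ \big(\mathbb{E}\!\big[e^{-a\tau_1^{\gamma}}\big]\big)^{n}.
\]
Now the Karamata/Tauberian estimate (Lemma~\ref{lem:tauber-Tgamma-EN-eps}) is applied only to $\tau_1$: it gives $1-\mathbb{E}[e^{-a\tau_1^{\gamma}}]\ge c_1(\varepsilon)\,a^{1/(2\gamma)}$ for small $a$, which is a small quantity (so the bound is consistent), and raising to the $n$-th power via $(1-x)^n\le e^{-nx}$ produces the genuine exponential $e^{-c_1(\varepsilon)\,n\,a^{1/(2\gamma)}}$. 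That $n$-th-power structure is the engine that converts a linear bound on a \emph{sub-unit} quantity into an arbitrarily small exponential; working directly with $\tau_n$ as you propose destroys it. Also note the constant $c_\varepsilon$ in the proposition carries the factor $2^{-1/(2\gamma)}$ coming from the comparison $a(1-h)\in[h,2h]$ on $h\in[0,\tfrac12]$, not from any discrete-to-Gamma integral passage, so your explanation of where $2^{-1/(2\gamma)}$ comes from is also off.
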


\begin{proposition}[Lower bound \(\gamma>1\)]\label{prop:lower-gamma>1-EN-eps}
Assume \(\gamma>1\) and \eqref{eq:edge-behavior}. Then, for every \(\varepsilon\in(0,1)\), \(c_0>0\) and \(0<y_0<y_1<\infty\) there exists $N_{\varepsilon,c_0,y_0,y_1}$ such that for all $n\ge N_{\varepsilon,c_0}$,
\begin{equation}\label{eq:lower-gamma>1-EN-eps}
n\,\PP(D^{\rightarrow}\ge n)\ \ge n^{1-2\gamma\beta}L(n^{2\gamma})C^-_{\gamma,\beta,\varepsilon,c_0}(y_0,y_1),
\end{equation}
where
\[
C^-_{\gamma,\beta,\epsilon,c_0}(y_0,y_1)
:= (1-\varepsilon^2)\,2\gamma\,\theta\int_{y_0}^{y_1} y^{2\gamma\beta-1}\exp\!\big(-(1+\varepsilon)c_0^{\gamma}y^{2\gamma}\big)\,dy,
\qquad 0<y_0<y_1<\infty,
\]
and \(\theta :=\ \tfrac12\big(1-\Phi(1/\sqrt{c_0})\big)\in(0,1)\).
\end{proposition}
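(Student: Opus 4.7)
The strategy combines a hitting-time floor on the event $\{\tau_n\le c_0 n^2\}$ with a Laplace-type localization of the $p$-integral near $p=1$, where the edge behavior \eqref{eq:edge-behavior} controls the density. Starting from \eqref{eq:reduce-key-EN-eps} and using monotonicity of $p\mapsto p^{\tau_n^\gamma}$ on the favorable event,
\[
\PP(D^{\rightarrow}\ge n)\ =\ \int_0^1 \EE\!\left[p^{\tau_n^\gamma}\right]f_\pi(p)\,dp\ \ge\ \PP(\tau_n\le c_0 n^2)\,\int_0^1 p^{c_0^\gamma n^{2\gamma}}f_\pi(p)\,dp.
\]
By the invariance principle for the simple random walk, $\tau_n/n^2$ converges in distribution to the Brownian hitting time of $1$, hence $\PP(\tau_n\le c_0 n^2)\to 2(1-\Phi(1/\sqrt{c_0}))$; for $n$ large this probability dominates $2\theta$, where $\theta=\tfrac12(1-\Phi(1/\sqrt{c_0}))$ is the conservative constant in the statement.

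For the $p$-integral, restrict $u:=1-p$ to the window $[y_0^{2\gamma}/n^{2\gamma},\,y_1^{2\gamma}/n^{2\gamma}]$ and change variables $u=y^{2\gamma}/n^{2\gamma}$, so that $du=2\gamma\,y^{2\gamma-1}n^{-2\gamma}\,dy$. For $n$ large enough, three approximations are valid on this window: \textbf{(a)} by \eqref{eq:edge-behavior}, $f_\pi(1-u)\ge(1-\varepsilon)\,u^{\beta-1}L(1/u)$; \textbf{(b)} by the Taylor bound $-\log(1-u)\le(1+\varepsilon)u$, one gets $(1-u)^{c_0^\gamma n^{2\gamma}}\ge\exp(-(1+\varepsilon)c_0^\gamma y^{2\gamma})$; \textbf{(c)} by uniform slow variation of $L$ on the compact set $[y_0,y_1]$ (Potter's bound), $L(n^{2\gamma}/y^{2\gamma})\ge(1-\eta)L(n^{2\gamma})$ for any prescribed $\eta\in(0,1)$. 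Substituting $u^{\beta-1}=y^{2\gamma(\beta-1)}n^{-2\gamma(\beta-1)}$ and the Jacobian, and collecting these factors, yields
\[
\int_{u_0}^{u_1} p^{c_0^\gamma n^{2\gamma}}f_\pi(1-u)\,du\ \ge\ 2\gamma\,(1-\varepsilon)(1-\eta)\,n^{-2\gamma\beta}L(n^{2\gamma})\int_{y_0}^{y_1} y^{2\gamma\beta-1}e^{-(1+\varepsilon)c_0^\gamma y^{2\gamma}}\,dy.
\]

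Multiplying by $n$ and by the hitting-time lower bound $2\theta$ produces the prefactor $4\gamma\theta(1-\varepsilon)(1-\eta)$; choosing $\eta\le(1-\varepsilon)/2$ (always possible by enlarging $n$) gives $2(1-\varepsilon)(1-\eta)\ge(1-\varepsilon^2)$, so the prefactor dominates the stated $2\gamma\theta(1-\varepsilon^2)$ and the claim
\[
n\,\PP(D^{\rightarrow}\ge n)\ \ge\ n^{1-2\gamma\beta}L(n^{2\gamma})\,C^{-}_{\gamma,\beta,\varepsilon,c_0}(y_0,y_1)
\]
follows for all $n\ge N_{\varepsilon,c_0,y_0,y_1}$. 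The main obstacle is the coordinated bookkeeping of the threshold $N_{\varepsilon,c_0,y_0,y_1}$: it must be taken large enough for (i) the invariance-principle floor on $\PP(\tau_n\le c_0 n^2)$, (ii) the Potter-type bound for the slowly varying $L$ on the compact window, (iii) the edge asymptotic on $f_\pi$ at $u=(y/n)^{2\gamma}$, and (iv) the Taylor estimate $-\log(1-u)\le(1+\varepsilon)u$ to all hold simultaneously, so that every residual error can be absorbed into the single factor $(1-\varepsilon^2)$ in the statement.
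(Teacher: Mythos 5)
Your proof is correct and follows essentially the same route as the paper: localize $p$ near $1$ in the window $1-p\in[(y_0/n)^{2\gamma},(y_1/n)^{2\gamma}]$, floor $\EE[p^{\tau_n^\gamma}]$ via the event $\{\tau_n\le c_0 n^2\}$, apply the edge-band lower bound on $f_\pi$ with the Taylor estimate $-\log(1-u)\le(1+\varepsilon)u$, change variables $u=(y/n)^{2\gamma}$, and control the slowly varying factor by the uniform convergence theorem on the compact $[y_0,y_1]$. The only substantive difference is that you use the full invariance-principle limit $\PP(\tau_n\le c_0 n^2)\to 2\bigl(1-\Phi(1/\sqrt{c_0})\bigr)=4\theta$ (conservatively floored at $2\theta$ for large $n$), whereas the paper's Lemma~\ref{lem:inv-principle-EN-eps} supplies only the floor $\theta$; the extra factor of $2$ gives you the slack to absorb both $(1-\varepsilon)$-type losses cleanly into the single prefactor $(1-\varepsilon^2)$, so your bookkeeping actually lands exactly on the stated constant.
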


\begin{theorem}[Case \(\gamma>1\)]\label{thm:gamma>1-final-EN-eps}
Assume \(\gamma>1\) and \eqref{eq:edge-behavior}. Then, for every $c_0>0$, we have the following:
\begin{equation}\label{lim_gamma>1}
    \begin{split}
       &2\gamma\ \theta \int_{0}^{\infty}y^{2\gamma\beta-1}e^{-c_0^{\gamma}y^{2\gamma}}dy\le\,\liminf_{n\to \infty} \frac{n\,\PP(D^{\rightarrow}\ge n)}{n^{1-2\beta\gamma}L(n^{2\gamma})}\\
       &\le\,\limsup_{n\to \infty} \frac{n\,\PP(D^{\rightarrow}\ge n)}{n^{1-2\beta\gamma}L(n^{2\gamma})}\le\,2\gamma\ \Gamma(2\beta\gamma)2^{\beta(1-\gamma)}\Bigg(\frac{\Gamma(1-\frac{1}{2\gamma})}{\sqrt{\pi}}\Bigg)^{-2\gamma\beta}
    \end{split}
\end{equation}
where \(\theta:=\ \tfrac12\big(1-\Phi(1/\sqrt{c_0})\big)\in(0,1)\) and $\Phi$ is the standard Gaussian cdf. 
\end{theorem}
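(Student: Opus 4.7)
The plan is to derive Theorem~\ref{thm:gamma>1-final-EN-eps} from Propositions~\ref{prop:upper-gamma>1-EN-eps} and~\ref{prop:lower-gamma>1-EN-eps} by normalising both inequalities by $n^{1-2\gamma\beta}L(n^{2\gamma})$, taking $\limsup$ and $\liminf$ respectively, and then letting the auxiliary parameters $\varepsilon,C,y_0,y_1$ approach their endpoints. Since the two propositions already deliver the correct leading-order rate, no further probabilistic input is required; the work reduces to identifying the limits of the explicit constants $D^+_{\gamma,\beta,\varepsilon,C}$ and $C^-_{\gamma,\beta,\varepsilon,c_0}(y_0,y_1)$ as $\varepsilon\downarrow 0$, $C\downarrow 1$, $y_0\downarrow 0$ and $y_1\uparrow\infty$.

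For the upper bound, I would first check that the residual term $n\cdot 2^{-n^\gamma}$ in \eqref{eq:upper-gamma>1-EN-eps} becomes negligible after normalisation: dividing by $n^{1-2\gamma\beta}L(n^{2\gamma})$ produces $n^{2\gamma\beta}\,2^{-n^\gamma}/L(n^{2\gamma})$, and since $L$ is slowly varying we have $1/L(n^{2\gamma})=O(n^{\delta})$ for every $\delta>0$ (Potter-type bounds), so the superexponential decay of $2^{-n^\gamma}$ wins. This gives
\[
\limsup_{n\to\infty}\frac{n\,\PP(D^{\rightarrow}\ge n)}{n^{1-2\gamma\beta}L(n^{2\gamma})}\;\le\; D^+_{\gamma,\beta,\varepsilon,C}
\]
for every admissible pair $(\varepsilon,C)$. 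Letting $\varepsilon\downarrow 0$ and $C\downarrow 1$, a direct algebraic calculation gives $c_\varepsilon^{-2\gamma\beta}\to 2^{\beta(1-\gamma)}\bigl(\Gamma(1-1/(2\gamma))/\sqrt{\pi}\bigr)^{-2\gamma\beta}$, while the remaining integral $\int_0^\infty e^{-y}y^{2\gamma\beta-1}\max\{y^\varepsilon,y^{-\varepsilon}\}\,dy$ is handled by dominated convergence with the $\varepsilon$-uniform envelope $e^{-y}y^{2\gamma\beta-1}\max\{y^{\varepsilon_0},y^{-\varepsilon_0}\}$, valid for any fixed $\varepsilon_0\in(0,(2\gamma\beta)\wedge \tfrac12)$ since $\max\{y^\varepsilon,y^{-\varepsilon}\}\le\max\{y^{\varepsilon_0},y^{-\varepsilon_0}\}$ whenever $\varepsilon\le\varepsilon_0$. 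The pointwise limit of the integrand is $e^{-y}y^{2\gamma\beta-1}$, whose integral is $\Gamma(2\gamma\beta)$; combining factors recovers exactly $K^\uparrow_{\gamma,\beta}$.

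For the lower bound, dividing \eqref{eq:lower-gamma>1-EN-eps} by $n^{1-2\gamma\beta}L(n^{2\gamma})$ and taking $\liminf$ produces
\[
\liminf_{n\to\infty}\frac{n\,\PP(D^{\rightarrow}\ge n)}{n^{1-2\gamma\beta}L(n^{2\gamma})}\;\ge\; C^-_{\gamma,\beta,\varepsilon,c_0}(y_0,y_1)
\]
for every $\varepsilon\in(0,1)$ and $0<y_0<y_1<\infty$. Letting successively $\varepsilon\downarrow 0$, $y_0\downarrow 0$, $y_1\uparrow\infty$, monotone convergence applied to the integrand $y^{2\gamma\beta-1}\exp(-(1+\varepsilon)c_0^\gamma y^{2\gamma})\mathbf{1}_{(y_0,y_1)}(y)$ yields $2\gamma\,\theta\int_0^\infty y^{2\gamma\beta-1}e^{-c_0^\gamma y^{2\gamma}}\,dy$, which is finite because the stretched-exponential factor controls the tail and $2\gamma\beta>0$ ensures integrability at the origin.

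The main (minor) technical point is the selection of the $\varepsilon$-uniform dominating function in the upper-bound integral: the threshold $\varepsilon_0<2\gamma\beta\wedge\tfrac12$ is what guarantees integrability near zero and legalises dominated convergence. Apart from this, the proof is pure bookkeeping inherited from the finite-$n$ estimates in Propositions~\ref{prop:upper-gamma>1-EN-eps} and~\ref{prop:lower-gamma>1-EN-eps}, and the argument proceeds uniformly over the full regime $\gamma>1$, $\beta>0$.
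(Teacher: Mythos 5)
Your proposal is correct and follows essentially the same route as the paper, which simply cites Propositions~\ref{prop:upper-gamma>1-EN-eps}--\ref{prop:lower-gamma>1-EN-eps} and invokes dominated convergence while sending $\varepsilon\downarrow 0$, $C\downarrow 1$, $y_0\downarrow 0$, $y_1\uparrow\infty$. You have filled in the details the paper omits — the negligibility of $n\,2^{-n^\gamma}/(n^{1-2\gamma\beta}L(n^{2\gamma}))$ via a Potter-type bound, the $\varepsilon$-uniform dominating envelope $e^{-y}y^{2\gamma\beta-1}\max\{y^{\varepsilon_0},y^{-\varepsilon_0}\}$ with $\varepsilon_0<2\gamma\beta$ to justify DCT, and the algebraic identification of $\lim c_\varepsilon^{-2\gamma\beta}=2^{\beta(1-\gamma)}\bigl(\Gamma(1-1/(2\gamma))/\sqrt{\pi}\bigr)^{-2\gamma\beta}$ — and your use of monotone convergence for the lower-bound integral is a clean alternative to the paper's stated DCT.
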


\begin{theorem}[Case \(\gamma<1\)]\label{thm:gamma<1-final-EN-eps}
Under \eqref{eq:edge-behavior} and \(\gamma<1\), we have the following:
\begin{equation}\label{lim_gamma_1}
    \begin{split}
       &2\gamma\ \Gamma(2\beta\gamma)2^{-\beta(1+\gamma)}\frac{\Gamma(\beta)}{\gamma\Gamma(\beta\gamma)}\le\,\liminf_{n\to \infty} \frac{n\,\PP(D^{\rightarrow}\ge n)}{n^{1-2\beta\gamma}L(n^{2\gamma})}\\
       &\le\,\limsup_{n\to \infty} \frac{n\,\PP(D^{\rightarrow}\ge n)}{n^{1-2\beta\gamma}L(n^{2\gamma})}\le\,2\gamma\ \Gamma(2\beta\gamma)2^{-\beta\gamma}\frac{\Gamma(\beta)}{\gamma\Gamma(\beta\gamma)}
    \end{split}
\end{equation}

\end{theorem}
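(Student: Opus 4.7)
The plan is to adapt the scaling analysis from the $\gamma>1$ case (Propositions~\ref{prop:upper-gamma>1-EN-eps}--\ref{prop:lower-gamma>1-EN-eps}), now exploiting that for $\gamma<1$ the function $x\mapsto x^{\gamma}$ is concave, and to generate both sides of the bound from one and the same limit integral via two elementary one-sided estimates for the logarithm. Starting from \eqref{eq:reduce-key-EN-eps},
\[
n\,\PP(D^{\rightarrow}\ge n)\;=\;n\int_0^1 \EE\!\big[p^{\tau_n^{\gamma}}\big]\,f_\pi(p)\,dp,
\]
I would change variables $p=1-y/n^{2\gamma}$, the natural scale because $\tau_n^{\gamma}/n^{2\gamma}=(\tau_n/n^2)^{\gamma}$ converges in distribution to $T^{\gamma}$ with $T\stackrel{d}{=}1/Z^2$, $Z\sim N(0,1)$ (the hitting time of $1$ by standard Brownian motion). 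Plugging in \eqref{eq:edge-behavior} and dividing by $n^{1-2\beta\gamma}L(n^{2\gamma})$ yields
\[
\frac{n\,\PP(D^{\rightarrow}\ge n)}{n^{1-2\beta\gamma}L(n^{2\gamma})}\;=\;(1+o(1))\int_0^{n^{2\gamma}} y^{\beta-1}\,\frac{L(n^{2\gamma}/y)}{L(n^{2\gamma})}\,\EE\!\Big[(1-y/n^{2\gamma})^{\tau_n^{\gamma}}\Big]\,dy.
\]

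For the upper bound I would use $\log(1-x)\le -x$ on $[0,1)$, giving $(1-y/n^{2\gamma})^{\tau_n^{\gamma}}\le \exp(-y(\tau_n/n^2)^{\gamma})$. For the lower bound I would use $\log(1-x)\ge -2x$ on $[0,\tfrac12]$, so that on the restricted range $y\le n^{2\gamma}/2$ one has $(1-y/n^{2\gamma})^{\tau_n^{\gamma}}\ge \exp(-2y(\tau_n/n^2)^{\gamma})$; truncating at $y=n^{2\gamma}/2$ is harmless because $y^{\beta-1}\EE[\exp(-2yT^{\gamma})]$ decays rapidly at infinity. To pass to the limit in each bound I would use, first, Potter's bounds to replace $L(n^{2\gamma}/y)/L(n^{2\gamma})$ by $1$ at the cost of an integrable dominator of the form $\max\{y^{\varepsilon},y^{-\varepsilon}\}$; and, second, a uniform tail estimate on $\tau_n/n^2$ from the reflection principle ($\PP(\tau_n\le n^2 t)\le 2(1-\Phi(1/\sqrt{t}))$ has Gaussian decay as $t\downarrow 0$), together with $\tau_n/n^2\Rightarrow T$, to control the Laplace transform uniformly in $n$.

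Once dominated convergence applies, the limit is explicit. By Fubini,
\[
\int_0^{\infty} y^{\beta-1}\,\EE\!\big[\exp(-c y T^{\gamma})\big]\,dy\;=\;c^{-\beta}\Gamma(\beta)\,\EE[T^{-\beta\gamma}]\qquad(c>0),
\]
and since $T^{-\beta\gamma}=|Z|^{2\beta\gamma}$ with $\EE[|Z|^{2\beta\gamma}]=2^{\beta\gamma}\Gamma(\beta\gamma+\tfrac12)/\sqrt{\pi}$, Legendre's duplication formula $\Gamma(\beta\gamma)\Gamma(\beta\gamma+\tfrac12)=\sqrt{\pi}\,2^{1-2\beta\gamma}\Gamma(2\beta\gamma)$ collapses this to
\[
\Gamma(\beta)\,\EE[T^{-\beta\gamma}]\;=\;2\gamma\,\Gamma(2\beta\gamma)\,2^{-\beta\gamma}\,\frac{\Gamma(\beta)}{\gamma\Gamma(\beta\gamma)}\;=\;K^{\uparrow}_{\gamma,\beta}.
\]
Taking $c=1$ for the upper bound reproduces $K^{\uparrow}_{\gamma,\beta}$ exactly, whereas taking $c=2$ for the lower bound introduces the extra factor $2^{-\beta}$, giving $K^{\downarrow}_{\gamma,\beta}=2^{-\beta}K^{\uparrow}_{\gamma,\beta}$ and accounting precisely for the $2^{\beta}$ gap between the two constants in \eqref{lim_gamma_1}.

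The main obstacle will be the dominated-convergence bookkeeping: producing a single integrable dominator, valid uniformly in $n$, that simultaneously tames the slowly varying factor (via Potter bounds) and the Laplace transform $\EE[\exp(-cy(\tau_n/n^2)^{\gamma})]$ for large $y$, given that $L$ may oscillate and that $\tau_n/n^2$ has a heavy right tail and support bounded below by $1/n$. The rest is a careful tracking of constants in parallel with the $\gamma>1$ propositions.
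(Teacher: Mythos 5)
Your proposal is correct and takes a genuinely different route from the paper. The paper's proof of Theorem~\ref{thm:gamma<1-final-EN-eps} is built on Lemma~\ref{lem:stable-rep-EN-eps}: for $\gamma<1$, the Bernstein function $t\mapsto e^{-a t^\gamma}$ is represented as $\int_0^\infty e^{-ut}\,\nu_{\gamma,a}(du)$ via a $\gamma$-stable subordinator, so that after Tonelli the inner quantity becomes the \emph{exactly known} hitting-time pgf $\EE[e^{-u\tau_n}]=f(e^{-u})^n$, which is then controlled through the two-sided exponential estimate of Lemma~\ref{lem:two-sided-exp-EN}. The negative moment $\int u^{-\beta\gamma}\nu_{\gamma,1}(du)=\tfrac{1}{\gamma}\Gamma(\beta)/\Gamma(\beta\gamma)$ then produces the $\Gamma(\beta)/(\gamma\Gamma(\beta\gamma))$ factor in the constants. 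You instead bypass the subordinator entirely: you change variables $p=1-y/n^{2\gamma}$, invoke the invariance-principle convergence $\tau_n/n^2\Rightarrow T$ (Brownian hitting time, $T\stackrel{d}{=}1/Z^2$), and evaluate the limit through $\Gamma(\beta)\,\EE[T^{-\beta\gamma}]$ and the Legendre duplication formula. Your constant identification is exact: $\Gamma(\beta)\EE[T^{-\beta\gamma}]=2^{1-\beta\gamma}\Gamma(2\beta\gamma)\Gamma(\beta)/\Gamma(\beta\gamma)=K^{\uparrow}_{\gamma,\beta}$, and $c=2$ yields the extra $2^{-\beta}$ giving $K^{\downarrow}_{\gamma,\beta}$. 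Notably, in both proofs the $2^{\beta}$ gap between the bounds comes from the very same elementary step $h\le -\log(1-h)\le 2h$; you have simply moved where that loss appears.

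What your route buys is directness and transparency: the emergence of $\beta_c=1/(2\gamma)$ and of the Gamma constants is visible as a single moment computation. What it costs is the dominated-convergence bookkeeping you flag: you must produce a single $n$-uniform integrable dominator that controls both the Potter factor $L(n^{2\gamma}/y)/L(n^{2\gamma})$ and the Laplace functional $\EE[\exp(-cy(\tau_n/n^2)^\gamma)]$ for large $y$. The tools you name are the right ones -- Potter's bound (Lemma~\ref{lem:potter-EN-eps}) gives $C\max\{y^{\varepsilon},y^{-\varepsilon}\}$, and the reflection principle (à la Lemma~\ref{lem:inv-principle-EN-eps}) gives $\PP(\tau_n\le \delta n^2)\le 2\exp(-c/\delta)$ uniformly in large $n$; optimizing $\delta\sim y^{-1/(\gamma+1)}$ in the split $\EE[\exp(-cy(\tau_n/n^2)^\gamma)]\le\PP(\tau_n/n^2\le\delta)+e^{-cy\delta^\gamma}$ yields a stretched-exponential dominator $\exp(-c'y^{1/(\gamma+1)})$ that tames any power. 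You should also add the standard split of the $p$-integral to confine the asymptotic \eqref{eq:edge-band} to $p\in[1-h_\varepsilon,1]$, with the region $p\le 1-h_\varepsilon$ contributing $O(n\,(1-h_\varepsilon)^{n^\gamma})\to 0$ -- the same manoeuvre the paper performs. With those details filled in, your argument closes cleanly and yields exactly \eqref{lim_gamma_1}.
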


\section{Proofs}\label{results}

We shall also use the following two properties in the proofs of our main results:

For a simple symmetric random walk on $\mathbb{Z}$, let \(\tau_n\ :=\ \inf\{k\ge0:\ S_k=n\}\) the first–passage time to $+n$. Then, \(\tau_n\) satisfies
\begin{equation}\label{sumindp}
\EE[s^{\tau_n}]=f(s)^n:=\Big(\tfrac{1-\sqrt{1-s^2}}{s}\Big)^{\!n}
\quad\text{and}\quad
\tau_n\ \stackrel{d}{=}\ \sum_{i=1}^{n} \tau_1^{i},\ \ \ \tau_1^{i}\ \text{i.i.d. with }\tau_1^{i}\stackrel{d}{=}\tau_1,
\end{equation}

see, e.g., \cite[Lemma 19, Theorem 33]{Konstantopoulos09}. 

In addition, by \eqref{eq:edge-behavior}, for every \(\varepsilon>0\) there exists \(h_{\varepsilon}>0\) such that, for all \(0<h\le h_{\varepsilon}\),
\begin{equation}\label{eq:edge-band}
(1-\varepsilon)\,h^{\beta-1}L(1/h)
\le f_\pi(1-h)\le
(1+\varepsilon)\,h^{\beta-1}L(1/h).
\end{equation}

We now proceed to the proofs.

\subsection{$\gamma>1$}\label{sec:gamma>1}

\begin{proof}[Proof of Proposition \ref{prop:upper-gamma>1-EN-eps}]
    Let \(\varepsilon\in \bigl(0,(2\gamma\beta)\wedge\tfrac{1}{2}\bigr)\). By \eqref{sumindp} and Lemmas~\ref{lem:tauber-Tgamma-EN-eps}–\ref{lem:convex-ineq-EN-eps},
there exists $a_\varepsilon>0$ such that, for all $0<a\le a_\varepsilon$,
\[
\EE(e^{-a\,\tau_n^\gamma})\ \le\ \big(\EE(e^{-a \tau_1^\gamma})\big)^n\ \le\ \big(1-c_1(\varepsilon) a^{1/(2\gamma)}\big)^n
\ \le\ \exp\big(-c_1(\varepsilon)\,n\,a^{1/(2\gamma)}\big).
\]
where \(c_1(\varepsilon)=(1-\varepsilon)\Gamma\!\bigl(1-\tfrac{1}{2\gamma}\bigr)\sqrt{\tfrac{2}{\pi}}\). 

Let \(a(1-h)=-\log p\); for \(h\in \bigl[0,\tfrac{1}{2}\bigr]\) one has \(a(1-h)\in [h,2h]\), hence on \(h\in\bigl(0,h_\varepsilon\wedge\tfrac{1}{2}\bigr]\),
\begin{equation}\label{gac1}
\EE\!\big(e^{-a(1-h)\,\tau_n^\gamma}\big)\le \exp\{-c\,n\, h^{1/(2\gamma)}\} 
\end{equation}
with \(c_{\varepsilon}=c_1(\varepsilon)\,2^{-1/(2\gamma)}\). Moreover, by (\ref{eq:edge-band}) there exists \(h_{\varepsilon}\) such that 
\begin{equation}\label{gac2}
f_\pi(1-h)\ \le\
(1+\varepsilon)\,h^{\beta-1}L(1/h),\qquad 0<h\le h_\varepsilon .
\end{equation}

Let \(h_{2,\varepsilon}:=h_{\varepsilon}\wedge\tfrac{a_{\varepsilon}}{2}\wedge\tfrac{1}{2}\). Then
\begin{equation}
    \begin{split}
        \int_{0}^{1}\EE\!\big(p^{\tau_n^{\gamma}}\big)f_{\pi}(p)\,dp
        &=\int_{0}^{1}\EE\!\big(e^{-a(p)\tau_n^{\gamma}}\big)f_{\pi}(p)\,dp\\
        &=\int_{0}^{1-h_{2,\varepsilon}}\EE\!\big(e^{-a(p)\tau_n^{\gamma}}\big)f_{\pi}(p)\,dp+\int_{1-h_{2,\varepsilon}}^{1}\EE\!\big(e^{-a(p)\tau_n^{\gamma}}\big)f_{\pi}(p)\,dp\\
        &=\int_{h_{2,\varepsilon}}^{1}\EE\!\big(e^{-a(1-h)\tau_n^{\gamma}}\big)f_{\pi}(1-h)\,dh+\int_{0}^{h_{2,\varepsilon}}\EE\!\big(e^{-a(1-h)\tau_n^{\gamma}}\big)f_{\pi}(1-h)\,dh\\
        &:=I_{1,n}+I_{2,n}\,.
    \end{split}
\end{equation}

Furthermore, since \(\tau_{n}\ge n\) a.s., one has
\begin{equation}\label{limI_1}
        I_{1,n}\le \int_{h_{2,\varepsilon}}^{1}\EE\!\big(e^{-a(1-h_{2,\varepsilon})\tau_n^{\gamma}}\big)f_{\pi}(1-h)\,dh
        \le e^{-a(1-h_{2,\varepsilon})n^{\gamma}}\int_{h_{2,\varepsilon}}^{1}f_{\pi}(1-h)\,dh
        \xrightarrow[n\to\infty]{}0.
\end{equation}

We now estimate the order of \(I_{2,n}\). By \eqref{gac1}, \eqref{gac2} and making the change of variable \(y=c_{\varepsilon}\,n\,h^{1/(2\gamma)}\),
\begin{equation}
    \begin{split}
        I_{2,n} &\le (1+\varepsilon) \int_{0}^{h_{2,\varepsilon}} e^{-c_{\varepsilon}\,n \,h^{1/(2\gamma)}}h^{\beta-1}L\!\Big(\tfrac{1}{h}\Big)\,dh\\
        &\le 2\gamma\,(1+\varepsilon) \,(n c_{\varepsilon})^{-2\gamma\beta}\int_{0}^{c_{\varepsilon}n h_{2,\varepsilon}}e^{-y}\,y^{2\gamma\beta-1}\,
        L\!\Big(\big(\tfrac{c_{\varepsilon}n}{y}\big)^{2\gamma}\Big)\,dy\,.
    \end{split}
\end{equation}

By Lemma~\ref{lem:potter-EN-eps}, equation (\ref{eq:potter-EN-eps}), given \(C>1\) there exists \(X(\varepsilon,C)\ge 1\) such that, for all \(x\ge X(\varepsilon,C)\) and \(z>0\),
\[
\frac{L(xz)}{L(x)}\ \le\ C\,\max\{z^\varepsilon,z^{-\varepsilon}\}.
\]
Setting \(x=(c_{\varepsilon}n)^{2\gamma}\) and \(z=\tfrac{1}{y}\) we get
\[
\int_{0}^{c_{\varepsilon}n h_{2,\varepsilon}}e^{-y}\,y^{2\gamma\beta-1}\,
L\!\Big(\big(\tfrac{c_{\varepsilon}n}{y}\big)^{2\gamma}\Big)\,dy
\ \le\ 
C\int_{0}^{c_{\varepsilon}n h_{2,\varepsilon}}e^{-y}\,y^{2\gamma\beta-1}\,
L\big((c_{\varepsilon}n)^{2\gamma}\big)\,\max\{y^{\varepsilon},y^{-\varepsilon}\}\,dy,
\]
and by Lemma~\ref{lem:potter-EN-eps}, equation (\ref{eq:UCT-EN-eps}), there exists \(N(\varepsilon,C)\) such that, if \(n\ge N(\varepsilon,C)\),
\begin{equation}\label{limI_2}
\begin{split}
    I_{2,n}&\le 2\gamma\,(1+\varepsilon)^{2} \,(n c_{\varepsilon})^{-2\gamma\beta}L(n^{2\gamma})\,C
    \int_{0}^{c_{\varepsilon}n h_{2,\varepsilon}}e^{-y}\,y^{2\gamma\beta-1}\max\{y^{\varepsilon},y^{-\varepsilon}\}\,dy\\
    &\le 2\gamma\,(1+\varepsilon)^{2} \,(n c_{\varepsilon})^{-2\gamma\beta}L(n^{2\gamma})\,C
    \int_{0}^{\infty}e^{-y}\,y^{2\gamma\beta-1}\max\{y^{\varepsilon},y^{-\varepsilon}\}\,dy\,.
\end{split}
\end{equation}

Therefore, combining \eqref{limI_1} (\(h_{2,\varepsilon} \le \frac{1}{2}\)) and \eqref{limI_2} yields \eqref{eq:upper-gamma>1-EN-eps}.

\end{proof}

\begin{proof}[Proof of Proposition \ref{prop:lower-gamma>1-EN-eps}]

Let \(0<y_0<y_1<\infty\). Let \(\varepsilon>0\). By (\ref{eq:edge-band}) there exists \(h_\varepsilon>0\) such that:
\[
(1-\varepsilon)\,h^{\beta-1}L(1/h)
\ \le\ f_\pi(1-h),\,\phantom{q}h\leq h_{\varepsilon}
\]
Moreover, since \(a(h):=-\log(h)\) satisfies \(\frac{a(1-h)}{h}\overset{h\to 0}{\to}1\), there exists \(h_{2,\varepsilon}>0\) such that 
\[
a(1-h)\le (1+\varepsilon)h,\,\phantom{a}h\le h_{2,\varepsilon}
\]

Evidently, there exists \(\,N_{\varepsilon,y_0,y_1}\in \mathbb{N}\) such that if \(n\geq N_{\varepsilon,y_0,y_1}\), then \([(\frac{y_0}{n})^{2\gamma},(\frac{y_1}{n})^{2\gamma}]\subset [0,h_{\varepsilon}\wedge h_{2,\varepsilon}]\).

Restrict \(p\) to \([1-(y_1/n)^{2\gamma},\,1-(y_0/n)^{2\gamma}]\) (i.e.,\ \(h\in[(y_0/n)^{2\gamma},(y_1/n)^{2\gamma}]\)). 

Let \(c_0>0\). By Lemma~\ref{lem:inv-principle-EN-eps} there exists \(N_{c_0}\in \mathbb{N}\) such that if \(n\ge N_{c_0}\) then \(P(\tau_n< c_0 n^2)\ge\theta\), where \(\theta:=\ \tfrac12\big(1-\Phi(1/\sqrt{c_0})\big)\in(0,1)\). Let \(n\ge N_{\varepsilon,y_0,y_1}\vee N_{c_0}\). Then, it is evident that, on \(\{\tau_n< c_0 n^2\}\),
\[
\EE(e^{-a\,\tau_n^\gamma})\ge \theta\,e^{-a\,c_0^\gamma n^{2\gamma}}.
\]
Then, performing the change of variables \(h=(y/n)^{2\gamma}\), we obtain:
\begin{equation}
    \begin{split}
        \PP(D^{\rightarrow}\ge n)&\geq \int_{1-(\frac{y_0}{n})^{2\gamma}}^{1-(\frac{y_1}{n})^{2\gamma}}E(e^{-a(p)\tau_n^{\gamma}})f_{\pi}(p)dp=\theta\int_{(\frac{y_0}{n})^{2\gamma}}^{(\frac{y_1}{n})^{2\gamma}}e^{-a(1-h)c_0^{\gamma}n^{2\gamma}}f_{\pi}(1-h)dh\\
        &\geq(1+\varepsilon)\ \theta\int_{(\frac{y_0}{n})^{2\gamma}}^{(\frac{y_1}{n})^{2\gamma}}e^{-(1+\varepsilon)hc_0^{\gamma}n^{2\gamma}}h^{\beta-1}L(1/h)dh\\
        &=(1+\varepsilon)\ 2\gamma n^{-2\gamma\beta}\theta\int_{y_0}^{y_1}e^{-(1+\varepsilon)c_0^{\gamma}y^{2\gamma}}y^{2\gamma\beta-1}L(\frac{n^{2\gamma}}{y^{2\gamma}})dy
    \end{split}
\end{equation}

Lemma~\ref{lem:potter-EN-eps}, equation (\ref{eq:UCT-EN-eps}), given \(\varepsilon>0\) there exists \(N_{2,\varepsilon,y_0,y_1}\in \mathbb{N}\) such that if \(n\geq N_{2,\varepsilon,y_0,y_1}\) then \(\sup_{[y_0,y_1]}L(\frac{n^{2\gamma}}{y^{2\gamma}})\geq (1-\varepsilon)L(n^{2\gamma})\). Therefore, if \(n\geq N_{\varepsilon,c_0,y_0,y_1}:=N_{\varepsilon,y_0,y_1}\vee N_{c_0}\vee N_{2,\varepsilon,y_0,y_1}\), then (\ref{eq:lower-gamma>1-EN-eps}) follows.

\end{proof}

\begin{proof}[Proof of Theorem \ref{thm:gamma>1-final-EN-eps}] By Propositions \ref{prop:upper-gamma>1-EN-eps} and \ref{prop:lower-gamma>1-EN-eps}, together with the Dominated Convergence Theorem---letting \(\varepsilon \downarrow 0\) and subsequently \(C \downarrow 1\), \(y_0 \downarrow 0\), and \(y_1 \uparrow \infty\)---we obtain \eqref{lim_gamma>1}.

\end{proof}

\subsection{$\gamma<1$}\label{sec:gamma<1}

\begin{proof}[Proof of Theorem \ref{thm:gamma<1-final-EN-eps}]
Let \(C_{+}>0\) and \(a(p):=-\log p\). By Lemma~\ref{lem:stable-rep-EN-eps} and Tonelli’s theorem 
\[
\begin{aligned}
n\,\PP(D^{\rightarrow}\ge n)
&=\int_0^1 n\,\EE\!\big[e^{-a(p)\,\tau_n^\gamma}\big]\, f_\pi(p)\,dp
=\int_0^1 n\,\Big(\int_0^\infty \EE\!\big[e^{-u\,\tau_n}\big]\ \nu_{\gamma,a(p)}(du)\Big)\, f_\pi(p)\,dp\\
&\stackrel{\eqref{eq:scale-EN-eps}}{=}\int_0^1 n\,\Big(\int_0^\infty \EE\!\big[e^{-\,(a(p)^{1/\gamma}v)\,\tau_n}\big]\ \nu_{\gamma,1}(dv)\Big)\, f_\pi(p)\,dp\\
&=\int_0^\infty \Big(\int_0^1 n\,\EE\!\big[e^{-\,(a(p)^{1/\gamma}v)\,\tau_n}\big]\, f_\pi(p)\,dp\Big)\,\nu_{\gamma,1}(dv),\\
&=\int_0^\infty \underbrace{\Big(\int_0^1 n\,f(e^{-\,a(p)^{1/\gamma}v})^{n}\, f_\pi(p)\,dp\Big)}_{=:T_n(v)}\,\nu_{\gamma,1}(dv),
\end{aligned}
\]

Fix \(\varepsilon\in(0,\beta\wedge1)\) and  \(\delta\in (0,1)\). Split
\[
T_n(v)=\underbrace{\int_{1-\delta}^1 n\, f\!\big(e^{-a(p)^{1/\gamma}v}\big)^{n} f_\pi(p)\,dp}_{=:I_{n,\delta,1}(v)}
+\underbrace{\int_0^{1-\delta} n\, f\!\big(e^{-a(p)^{1/\gamma}v}\big)^{n} f_\pi(p)\,dp}_{=:I_{n,\delta,2}(v)}.
\]
For \(I_{n,\delta,2}(v)\), since \(a(1-\delta)>0\). Then, for every \(v>0\),
\[
I_{n,\delta,2}(v)\ \le\ n\, f\!\big(e^{-a(1-\delta)^{1/\gamma}v}\big)^{\!n} \int_0^{1-\delta} f_\pi(p)\,dp
\ \le\ n\,\, f\!\big(e^{-a(1-\delta)^{1/\gamma}v}\big)^{n}.
\]
Integrating this bound in \(v\) against \(\nu_{\gamma,1}(dv)\) and invoking Lemma~\ref{lem:stable-rep-EN-eps} yields
\begin{equation}\label{eq:I2-small-integrated}
\int_0^\infty I_{n,\delta,2}(v)\,\nu_{\gamma,1}(dv)
\ \le\ n\, \int_0^\infty f\!\big(e^{-a(1-\delta)^{1/\gamma}v}\big)^{n}\,\nu_{\gamma,1}(dv)
\ =\ n\,\,\EE\!\left[e^{-a(1-\delta)\,\tau_n^{\gamma}}\right].
\end{equation}
Since \(\tau_n\ge n\) a.s., we have \(\EE[e^{-a(1-\delta) \tau_n^\gamma}]\le e^{-a(1-\delta) n^\gamma}\). Therefore,

\begin{equation}\label{cotaIn,delta,2}
    \int_0^\infty I_{n,\delta,2}(v)\,\nu_{\gamma,1}(dv)\le n e^{-a(1-\delta) n^\gamma}
\end{equation}

Write \(p=1-h\). For \(h\in(0,\tfrac12)\), we have the elementary bounds
\begin{equation}\label{eq:log-ineq}
h\ \le\ a(1-h)=-\log(1-h)\ \le\ \frac{h}{1-h}\ \le\ 2h
\end{equation}

Now, by Lemma~\ref{lem:two-sided-exp-EN}, there exist \(u_\varepsilon\in(0,1)\) such that, whenever
\[
a(1-h)^{1/\gamma}v\ \le\ u_\varepsilon,
\]
we have \(e^{-(1+\varepsilon)\sqrt{2u}}\le f(e^{-u})\le e^{-(1-\varepsilon)\sqrt{2u}}\). 

Now, since \(\frac{a(1-h)}{h}\overset{h\to 0}{\to}1\), there exists \(\delta_\varepsilon>0\) such that if \(h\in (0,\delta_\varepsilon)\), then, 
\[
a(1-h)^{1/\gamma}v\leq u_{\varepsilon},\,\forall\, 0<v\le C_{+}
\]

Then, by \eqref{eq:log-ineq},
\[
e^{-\,(1+\varepsilon)\sqrt{2}\,n\,v^{1/2} (2h)^{1/(2\gamma)}}\ \le\ f\!\big(e^{-a(1-h)^{1/\gamma}v}\big)^{n}
\ \le\ e^{-\,(1-\varepsilon)\sqrt{2}\,n\,v^{1/2} h^{1/(2\gamma)}},\, h\in(0,\delta_\varepsilon),\,v\le C_{+}.
\]
Moreover, there exists \(h_{\varepsilon}\) such that:

\[
(1-\varepsilon)\,h^{\beta-1}L(1/h)
\ \le\ f_\pi(1-h)\ \le\
(1+\varepsilon)\,h^{\beta-1}L(1/h),\qquad 0<h\le h_\varepsilon.
\]

Let \(h\in (0,\eta_{\varepsilon})\) with \(\eta_{\varepsilon}:=\delta_{\varepsilon}\wedge h_{\varepsilon}\). 

Suppose that \(0<C_{-}\leq v\leq C_{+}\) for some positive constant \(C_{-}\). Define \(\kappa(v):=2^{1/2((\gamma+1)/\gamma)}\,v^{1/2}\). Since \(C_{-}\leq v\leq C_{+}\), there exists \(N_{\varepsilon,C_{-},C_{+}}\in \mathbb{N}\) such that if \(n\ge N_{\varepsilon,C_{-},C_{+}}\) then \((1+\varepsilon)\,\kappa(v)\,n\,\eta_{\varepsilon}^{1/(2\gamma)}>C_{+}\).
Then, performing the change of variables
\begin{equation*}
y:=(1+\varepsilon)\,\kappa(v)\,n\,h^{1/(2\gamma)}
\quad\Longleftrightarrow\quad
h=\Big(\frac{y}{(1+\varepsilon)\,\kappa(v)\,n}\Big)^{2\gamma},\qquad
dh=(2\gamma)\,((1+\varepsilon)\,\kappa(v)\,n)^{-2\gamma}\,y^{2\gamma-1}dy,
\end{equation*}
we obtain
\begin{equation*}
    \begin{split}
        &I_{n,\eta_{\varepsilon},1}(v)=\int_{0}^{\eta_{\varepsilon}} n\, f\!\big(e^{-a(1-h)^{1/\gamma}v}\big)^{n} f_\pi(1-h)\,dh\\
        &\geq (1-\varepsilon)\int_{0}^{\eta_{\varepsilon}} n\, e^{-\,(1+\varepsilon)\sqrt{2}\,n\,v^{1/2} (2h)^{1/(2\gamma)}}h^{\beta-1}L(1/h)dh\\
        &=(1-\varepsilon)\ (2\gamma)n\,((1+\varepsilon)\,\kappa(v)\,n)^{-2\beta\gamma}\int_{0}^{(1+\varepsilon)\,\kappa(v)\,n\,\eta_{\varepsilon}^{1/(2\gamma)}} e^{-y}y^{2\beta\gamma-1}L\Bigg(\Bigg(\frac{(1+\varepsilon)\kappa(v)}{y}\Bigg)^{2\gamma}n^{2\gamma}\Bigg)\, \,dy\\
        &\geq (1-\varepsilon)\ (2\gamma)n\,((1+\varepsilon)\,\kappa(v)\,n)^{-2\beta\gamma}\int_{C_{-}}^{C_{+}} e^{-y}y^{2\beta\gamma-1}L\Bigg(\Bigg(\frac{(1+\varepsilon)\kappa(v)}{y}\Bigg)^{2\gamma}n^{2\gamma}\Bigg)\, \,dy
    \end{split}
\end{equation*}

Since there exist constants \(0<a<b<\infty\) such that \(\Big(\frac{(1+\varepsilon)\kappa(v)}{y}\Big)^{2\gamma}\in [a,b]\) for all \(y,v\in[C_{-},C_{+}]\), then by Lemma~\ref{lem:potter-EN-eps}, equation (\ref{eq:UCT-EN-eps}), there exists \(N_{2,\varepsilon,C_{-},C_{+}}\in\mathbb{N}\) such that if \(n\ge N_{2,\varepsilon,C_{-},C_{+}}\vee N_{\varepsilon,C_{-},C_{+}}\) we have
\begin{equation}
    I_{n,\eta_{\varepsilon},1}(v)\geq (1-\varepsilon)^2(2\gamma)n^{1-2\beta\gamma}L(n^{2\gamma})\,((1+\varepsilon)\,\kappa(v))^{-2\beta\gamma}\int_{C_{-}}^{C_{+}} e^{-y}y^{2\beta\gamma-1} \,dy
\end{equation}

Therefore, 
\begin{equation}\label{lim_inf_Dn}
\begin{split}
 &n\,\PP(D^{\rightarrow}\ge n)\ge  \int_{C_{-}}^{C_{+}} I_{n,\eta_{\varepsilon},1}(v)\,\nu_{\gamma,1}(dv) \\ 
 &\geq (1-\varepsilon)^2(2\gamma)n^{1-2\beta\gamma}L(n^{2\gamma})\,(1+\varepsilon)^{-2\beta\gamma}\int_{C_{-}}^{C_{+}} e^{-y}y^{2\beta\gamma-1} \,dy\int_{C_{-}}^{C_{+}}\kappa(v)^{-2\beta\gamma}\,\nu_{\gamma,1}(dv)
\end{split}    
\end{equation}

Now, for \(0<v\leq C_{+}\). Define \(\rho(v):=\sqrt{2}v^{1/2}\) and perform the change of variables
\begin{equation*}
y:=(1-\varepsilon)\,\rho(v)\,n\,h^{1/(2\gamma)}\quad\Longleftrightarrow\quad
h=\Big(\frac{y}{(1-\varepsilon)\,\rho(v)\,n}\Big)^{2\gamma},\quad
dh=(2\gamma)\,((1-\varepsilon)\,\rho(v)\,n)^{-2\gamma}\,y^{2\gamma-1}\,dy,
\end{equation*}

We have
\begin{equation*}
    \begin{split}
        &I_{n,\eta_{\varepsilon},1}(v)=\int_{0}^{\eta_{\varepsilon}} n\, f\!\big(e^{-a(1-h)^{1/\gamma}v}\big)^{n} f_\pi(1-h)\,dh\\
        &\leq (1+\varepsilon)\int_{0}^{\eta_{\varepsilon}} n\, e^{-\,(1-\varepsilon)\sqrt{2}\,n\,v^{1/2} h^{1/(2\gamma)}}h^{\beta-1}L(1/h)dh\\
        &=(1+\varepsilon)\ (2\gamma)n\,((1-\varepsilon)\,\rho(v)\,n)^{-2\beta\gamma}\int_{0}^{(1-\varepsilon)\,\rho(v)\,n\,\eta_{\varepsilon}^{1/(2\gamma)}} e^{-y}y^{2\beta\gamma-1}L\Bigg(\Bigg(\frac{(1-\varepsilon)\rho(v)}{y}\Bigg)^{2\gamma}n^{2\gamma}\Bigg)\, \,dy\\
        &\leq (1+\varepsilon)\ (2\gamma)n^{1-2\beta\gamma}\,((1-\varepsilon)\,\rho(v))^{-2\beta\gamma}\int_{0}^{\infty} e^{-y}y^{2\beta\gamma-1}L\Bigg(\Bigg(\frac{(1-\varepsilon)\rho(v)}{y}\Bigg)^{2\gamma}n^{2\gamma}\Bigg)\, \,dy
    \end{split}
\end{equation*}

By Lemma~\ref{lem:potter-EN-eps}, equation (\ref{eq:potter-EN-eps}), given \(C>1\) there exists \(X(\varepsilon,C)\ge 1\) such that, for all \(x\ge X(\varepsilon,C)\) and \(z>0\),
\[
\frac{L(xz)}{L(x)}\ \le\ C\,\max\{z^\varepsilon,z^{-\varepsilon}\}.
\]

Therefore, for \(n\) sufficiently large,
\begin{equation}\label{I_1_b}
    \begin{split}
       &I_{n,\eta_{\varepsilon},1}(v)\le  (1+\varepsilon)C(2\gamma)n^{1-2\beta\gamma}L(n^{2\gamma})\,((1-\varepsilon)\,\rho(v))^{-2\beta\gamma}\\
       &\cdot \int_{0}^{\infty} e^{-y}y^{2\beta\gamma-1}\,\max\Bigg\{\Bigg(\frac{(1-\varepsilon)\rho(v)}{y}\Bigg)^{2\gamma\varepsilon},\Bigg(\frac{(1-\varepsilon)\rho(v)}{y}\Bigg)^{-2\gamma\varepsilon}\Bigg\} \,dy\\
       &\le  (1+\varepsilon)C(2\gamma)n^{1-2\beta\gamma}L(n^{2\gamma})\,(1-\varepsilon)^{-2\beta\gamma-2\gamma\varepsilon}\rho(v)^{-2\beta\gamma}\max\{\rho(v)^{2\gamma\varepsilon},\rho(v)^{-2\gamma\varepsilon}\}\\
       &\cdot \int_{0}^{\infty} e^{-y}y^{2\beta\gamma-1}\,\max\{y^{2\gamma\varepsilon},y^{-2\gamma\varepsilon}\} \,dy\\
    \end{split}
\end{equation}

Additionally, it is straightforward to verify that if \(v>C_{+}\) then \(I_{n,\eta_{\varepsilon},1}(v)\le I_{n,\eta_{\varepsilon},1}(C_{+})\). Therefore, by (\ref{eq:I2-small-integrated}) and (\ref{I_1_b})
\begin{equation}\label{lim_sup_Dn}
    \begin{split}
    &n\,\PP(D^{\rightarrow}\ge n)=\int_0^\infty \Big(\int_0^1 n\,f(e^{-\,a(p)^{1/\gamma}v})^{n}\, f_\pi(p)\,dp\Big)\,\nu_{\gamma,1}(dv)\\
    &=\int_0^\infty \Big(I_{n,\eta_{\varepsilon},1}(v)+I_{n,\eta_{\varepsilon},2}(v)\Big)\,\nu_{\gamma,1}(dv)\\
    &\leq \int_0^{C_{+}} I_{n,\eta_{\varepsilon},1}(v)\,\nu_{\gamma,1}(dv)+I_{n,\eta_{\varepsilon},1}(C_{+})\nu_{\gamma,1}((C_{+},\infty))+ne^{-a(1-\eta_{\epsilon})n^{\gamma}}\\
    &\le  (1+\varepsilon)C(2\gamma)n^{1-2\beta\gamma}L(n^{2\gamma})\,(1-\varepsilon)^{-2\beta\gamma-2\gamma\varepsilon} \int_{0}^{\infty} e^{-y}y^{2\beta\gamma-1}\,\max\{y^{2\gamma\varepsilon},y^{-2\gamma\varepsilon}\} \,dy\\
       &\cdot\Bigg(\int_0^{C_{+}}\rho(v)^{-2\beta\gamma}\max\{\rho(v)^{2\gamma\varepsilon},\rho(v)^{-2\gamma\varepsilon}\}\,\nu_{\gamma,1}(dv)\\
       &+\rho(C_{+})^{-2\beta\gamma}\max\{\rho(C_{+})^{2\gamma\varepsilon},\rho(C_{+})^{-2\gamma\varepsilon}\}\nu_{\gamma,1}((C_{+},\infty))\Bigg)+ne^{-a(1-\eta_{\epsilon})n^{\gamma}}\\
    \end{split}
\end{equation}

Therefore, by (\ref{lim_inf_Dn}) and (\ref{lim_sup_Dn})
\small
\begin{equation*}
    \begin{split}
       &(1-\varepsilon)^2(2\gamma)\,(1+\varepsilon)^{-2\beta\gamma}\int_{C_{-}}^{C_{+}} e^{-y}y^{2\beta\gamma-1} \,dy\int_{C_{-}}^{C_{+}}\kappa(v)^{-2\beta\gamma}\,\nu_{\gamma,1}(dv)\le\,\liminf_{n\to \infty} \frac{n\,\PP(D^{\rightarrow}\ge n)}{n^{1-2\beta\gamma}L(n^{2\gamma})}\\
       &\le\,\limsup_{n\to \infty} \frac{n\,\PP(D^{\rightarrow}\ge n)}{n^{1-2\beta\gamma}L(n^{2\gamma})}\le\,(1+\varepsilon)C(2\gamma)\,(1-\varepsilon)^{-2\beta\gamma-2\gamma\varepsilon} \int_{0}^{\infty} e^{-y}y^{2\beta\gamma-1}\,\max\{y^{2\gamma\varepsilon},y^{-2\gamma\varepsilon}\} \,dy\\
       &\,\cdot\Bigg(\int_0^{C_{+}}\rho(v)^{-2\beta\gamma}\max\{\rho(v)^{2\gamma\varepsilon},\rho(v)^{-2\gamma\varepsilon}\}\,\nu_{\gamma,1}(dv)\\
        &\hspace{5mm}+\rho(C_{+})^{-2\beta\gamma}\max\{\rho(C_{+})^{2\gamma\varepsilon},\rho(C_{+})^{-2\gamma\varepsilon}\}\nu_{\gamma,1}((C_{+},\infty))\Bigg)
    \end{split}
\end{equation*}
\normalsize

Finally, by the Dominated Convergence Theorem—letting \(\varepsilon \downarrow 0\) and subsequently \(C \downarrow 1\), \(C_{-} \downarrow 0\), and \(C_{+} \uparrow \infty\) and \eqref{eq:neg-moment-EN-eps}, we obtain:

\begin{equation*}
    \begin{split}
       &2\gamma\ \Gamma(2\beta\gamma)2^{-\beta(1+\gamma)}\frac{\Gamma(\beta)}{\gamma\Gamma(\beta\gamma)}\le\,\liminf_{n\to \infty} \frac{n\,\PP(D^{\rightarrow}\ge n)}{n^{1-2\beta\gamma}L(n^{2\gamma})}\\
       &\le\,\limsup_{n\to \infty} \frac{n\,\PP(D^{\rightarrow}\ge n)}{n^{1-2\beta\gamma}L(n^{2\gamma})}\le\, 2\gamma\ \Gamma(2\beta\gamma)2^{-\beta\gamma}\frac{\Gamma(\beta)}{\gamma\Gamma(\beta\gamma)}
    \end{split}
\end{equation*}

Hence, we conclude (\ref{lim_gamma_1}).

\end{proof}

\begin{proof}[Proof of Theorem \ref{thm:general-gamma}]
Applying \cite[Proposition~1.2]{CarvalhoMachado2025} to the bounds in \eqref{lim_gamma>1}–\eqref{lim_gamma_1} establishes the claim.
\end{proof}

\section{Appendix A: Auxiliary Results}\label{sec:auxiliary-results}

We begin with several lemmas that provide estimates for the asymptotic behavior of
\(n\,\PP(D^{\rightarrow}\ge n)\) as \(n\to\infty\).

\begin{lemma}\label{lem:two-sided-exp-EN}
Let \(f:(0,1]\to(0,1]\) be defined by
\[
f(x)\;:=\;\frac{1-\sqrt{1-x^{2}}}{x}.
\]
Then there exist constants \(C>0\) and \(U\in(0,1)\) such that, for all \(u\in(0,U]\),
\begin{equation}\label{eq:logf-bound-raw}
\bigl|\log f(e^{-u})+\sqrt{2u}\bigr|\ \le\ C\,u^{3/2}.
\end{equation}
Consequently, for every \(\varepsilon\in(0,1)\) there exists \(u_\varepsilon\in(0,U]\) such that, for all \(u\in(0,u_\varepsilon]\),
\begin{equation}\label{eq:two-sided-eps-EN-eps}
e^{-(1+\varepsilon)\sqrt{2u}}
\;\le\;
f(e^{-u})
\;\le\;
e^{-(1-\varepsilon)\sqrt{2u}}.
\end{equation}
Moreover, one may choose explicitly
\begin{equation}\label{eq:explicit-ueps}
u_\varepsilon\ :=\ \min\!\left\{\,U,\ \frac{2\,\varepsilon^2}{C^2}\right\}.
\end{equation}
\end{lemma}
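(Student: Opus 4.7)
The plan is to replace $f(x)$ by a form that is well behaved as $x \uparrow 1$ (equivalently as $u \downarrow 0$) and then Taylor-expand $\log f(e^{-u})$ around $u=0$ to the order needed to isolate the leading term $-\sqrt{2u}$ with a $u^{3/2}$ remainder; the two-sided exponential bound in \eqref{eq:two-sided-eps-EN-eps} will then follow by comparing the raw error with the linear tolerance $\varepsilon\sqrt{2u}$ on each side.

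For the first step I would rationalize by multiplying numerator and denominator by the conjugate of $1-\sqrt{1-x^{2}}$, obtaining the cleaner identity
\[
f(x)\;=\;\frac{x}{1+\sqrt{1-x^{2}}},\qquad x\in(0,1].
\]
Substituting $x=e^{-u}$ and taking logarithms reduces the task to expanding
\[
\log f(e^{-u})\;=\;-u\;-\;\log\!\bigl(1+\sqrt{1-e^{-2u}}\bigr)
\]
near $u=0$. I would then expand $1-e^{-2u}=2u\,(1-u+\tfrac{2}{3}u^{2}+O(u^{3}))$, apply $\sqrt{1+t}=1+t/2+O(t^{2})$ to extract $\sqrt{1-e^{-2u}}=\sqrt{2u}\,(1-u/2+O(u^{2}))$, and finally insert this into $\log(1+s)=s-s^{2}/2+s^{3}/3+O(s^{4})$ with $s=\sqrt{2u}+O(u^{3/2})$. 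Collecting terms of order up to $u^{3/2}$ yields $\log f(e^{-u})=-\sqrt{2u}+O(u^{3/2})$, which is exactly \eqref{eq:logf-bound-raw} once the leading remainder constant is absorbed into a single $C>0$ and $U\in(0,1)$ is fixed small enough that the $O(u^{2})$ tail is dominated by $u^{3/2}$.

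To deduce \eqref{eq:two-sided-eps-EN-eps}, taking logarithms converts the target bound into
\[
-(1+\varepsilon)\sqrt{2u}\ \le\ \log f(e^{-u})\ \le\ -(1-\varepsilon)\sqrt{2u},
\]
which, combined with the two-sided estimate from \eqref{eq:logf-bound-raw}, is implied by $Cu^{3/2}\le\varepsilon\sqrt{2u}$. Squaring both sides and simplifying gives the equivalent condition $u\le \sqrt{2}\,\varepsilon/C$; the slightly stronger explicit choice $u_{\varepsilon}=2\varepsilon^{2}/C^{2}$ in \eqref{eq:explicit-ueps} then suffices (after enlarging $C$ if necessary so that $2\varepsilon^{2}/C^{2}\le \sqrt{2}\,\varepsilon/C$ for every $\varepsilon\in(0,1)$, which is consistent with absorbing subleading constants into $C$).

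The only delicate point is the bookkeeping of remainders through the nested square-root-inside-logarithm expansion: one must retain the $O(u^{2})$ correction in $\sqrt{1-e^{-2u}}=\sqrt{2u}(1-u/2+O(u^{2}))$, since both the quadratic and cubic terms of $\log(1+s)$ contribute at the $u^{3/2}$ scale, and one must verify that the two contributions combine to a nonzero (but bounded) coefficient on $u^{3/2}$ without losing precision. Once this expansion is pinned down, the remaining argument is an elementary algebraic comparison producing the explicit threshold $u_{\varepsilon}$.
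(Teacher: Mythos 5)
Your proof is correct and follows essentially the same route as the paper's: both expand $1-e^{-2u}$, then the square root, then the logarithm, and isolate the leading term $-\sqrt{2u}$ with an $O(u^{3/2})$ remainder; the only cosmetic difference is your rationalization $f(x)=x/(1+\sqrt{1-x^2})$, which trades $\log(1-\sqrt{y(u)})$ for $-\log(1+\sqrt{1-e^{-2u}})$ but yields the same expansion. Your observation that one may need to enlarge $C$ so that $2\varepsilon^2/C^2\le\sqrt2\,\varepsilon/C$ holds for all $\varepsilon\in(0,1)$ (equivalently $C\ge\sqrt2$) is a valid point that the paper leaves implicit.
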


\begin{proof}
Expand \(e^{-2u}\) at \(u=0\) and set
\[
y(u):=1-e^{-2u}=2u+O(u^{2})=2u\,(1+O(u)).
\]
By Lemma~2.1(i) in \cite{JGF} with \(\gamma=\tfrac12\), for \(u\) near \(0\) we have
\[
\sqrt{1+u}=1+\tfrac12 u+O(u^{2}).
\]
In particular,
\[
\sqrt{y(u)}=\sqrt{2u}\,\bigl(1+O(u)\bigr).
\]
Moreover, by Lemma~2.1(iii) in \cite{JGF}, for \(u\) near \(0\),
\[
\log(1-u)=-u-\tfrac12 u^{2}+O(u^{3}).
\]
As \(u\downarrow0\),
\begin{equation*}
\begin{split}
  \log(f(e^{-u}))&=\log\Bigg(\frac{1-\sqrt{1-e^{-2u}}}{e^{-u}}\Bigg)= \log(1-\sqrt{y(u)})+u=-\sqrt{y(u)}-\frac{1}{2}y(u)+O(y(u)^{\frac{3}{2}})+u\\
  &=-\sqrt{2u}(1+\frac{1}{2}O(u)+O(u^2)) -u(1+O(u))+O(y(u)^{\frac{3}{2}})+u
\end{split}
\end{equation*}
since \(y(u)=O(u)\) and \(O(y(u)^{3/2})=O(u^{3/2})\). This proves \eqref{eq:logf-bound-raw}. Exponentiating \eqref{eq:logf-bound-raw} yields
\[
e^{-\sqrt{2u}}\,e^{-C u^{3/2}}
\ \le\ f(e^{-u})\ \le\
e^{-\sqrt{2u}}\,e^{C u^{3/2}}.
\]
Fix \(\varepsilon\in(0,1)\) and choose \(u_\varepsilon\) as in \eqref{eq:explicit-ueps}. If \(0<u\le u_\varepsilon\le 1\), then \(C u^{3/2}\le \varepsilon \sqrt{2u}\), and substituting into the last estimate gives \eqref{eq:two-sided-eps-EN-eps}.
\end{proof}

\begin{lemma}\label{lem:stable-rep-EN-eps}
Fix $0<\gamma< 1$ and $a>0$. Then there exists a \emph{probability} measure $\nu_{\gamma,a}$ on $(0,\infty)$ such that, for all $t\ge 0$,
\begin{equation}\label{eq:bernstein-EN-eps}
e^{-a\,t^{\gamma}}=\int_{0}^{\infty} e^{-u t}\,\nu_{\gamma,a}(du).
\end{equation}
Moreover, the family $(\nu_{\gamma,a})_{a>0}$ satisfies the exact scaling
\begin{equation}\label{eq:scale-EN-eps}
\nu_{\gamma,a}(du)=a^{-1/\gamma}\,\nu_{\gamma,1}\!\left(d\big(u/a^{1/\gamma}\big)\right),
\end{equation}
and, for every $\theta>0$,
\begin{equation}\label{eq:neg-moment-EN-eps}
\int_{0}^{\infty} u^{-\theta}\,\nu_{\gamma,1}(du)
=\mathbb{E}\!\left[S_1^{-\theta}\right]
=\frac{1}{\gamma}\,\frac{\Gamma(\theta/\gamma)}{\Gamma(\theta)}<\infty,
\end{equation}
where $S=(S_t)_{t\ge0}$ denotes a (one–sided) $\gamma$–stable subordinator with Laplace transform $\mathbb{E}[e^{-\lambda S_t}]=e^{-t\lambda^\gamma}$.
\end{lemma}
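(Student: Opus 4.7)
The plan is to realize $\nu_{\gamma,a}$ concretely as the law of a scaled one–sided stable random variable, which makes all three assertions essentially transparent. Let $S=(S_t)_{t\ge 0}$ be the $\gamma$–stable subordinator with Laplace transform $\mathbb{E}[e^{-\lambda S_t}]=e^{-t\lambda^\gamma}$ (whose existence for $0<\gamma<1$ is the standard Lévy–process construction, or equivalently follows from Bernstein's theorem applied to $\lambda\mapsto e^{-\lambda^\gamma}$, completely monotone because $\lambda\mapsto\lambda^\gamma$ is Bernstein for $\gamma\in(0,1)$). Define $\nu_{\gamma,a}$ to be the distribution of $a^{1/\gamma}S_1$. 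Since $S_1>0$ almost surely, $\nu_{\gamma,a}$ is a probability measure supported on $(0,\infty)$.

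To verify \eqref{eq:bernstein-EN-eps}, I would just compute, for $t\ge 0$,
\begin{equation*}
\int_0^\infty e^{-ut}\,\nu_{\gamma,a}(du)
\;=\;\mathbb{E}\!\left[e^{-t\,a^{1/\gamma}S_1}\right]
\;=\;e^{-(a^{1/\gamma}t)^{\gamma}}
\;=\;e^{-a t^{\gamma}},
\end{equation*}
where the middle equality uses the Laplace transform of $S_1$ with spectral parameter $\lambda=a^{1/\gamma}t$. The scaling relation \eqref{eq:scale-EN-eps} is then immediate: for any Borel $B\subset(0,\infty)$,
\begin{equation*}
\nu_{\gamma,a}(B)\;=\;\mathbb{P}\!\left(a^{1/\gamma}S_1\in B\right)\;=\;\mathbb{P}\!\left(S_1\in a^{-1/\gamma}B\right)\;=\;\nu_{\gamma,1}\!\left(a^{-1/\gamma}B\right),
\end{equation*}
which is the change–of–variable formula written in the form given in the statement.

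For \eqref{eq:neg-moment-EN-eps}, my approach is the classical Gamma–function trick: use the identity $u^{-\theta}=\Gamma(\theta)^{-1}\int_0^\infty \lambda^{\theta-1}e^{-\lambda u}\,d\lambda$, valid for $u,\theta>0$, together with Tonelli (applicable since the integrand is nonnegative) to swap $\mathbb{E}$ and $\int$:
\begin{equation*}
\mathbb{E}\!\left[S_1^{-\theta}\right]
\;=\;\frac{1}{\Gamma(\theta)}\int_0^\infty \lambda^{\theta-1}\,\mathbb{E}\!\left[e^{-\lambda S_1}\right]\,d\lambda
\;=\;\frac{1}{\Gamma(\theta)}\int_0^\infty \lambda^{\theta-1}\,e^{-\lambda^{\gamma}}\,d\lambda.
\end{equation*}
The substitution $w=\lambda^{\gamma}$ (so $d\lambda=\gamma^{-1}w^{1/\gamma-1}\,dw$) collapses the remaining integral to $\gamma^{-1}\int_0^\infty w^{\theta/\gamma-1}e^{-w}\,dw=\gamma^{-1}\Gamma(\theta/\gamma)$, giving the closed form $\Gamma(\theta/\gamma)/(\gamma\,\Gamma(\theta))$; finiteness is automatic because $\theta/\gamma>0$.

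There is no real analytic obstacle here: every step is either a direct computation or an invocation of standard properties of stable subordinators. If one wishes to avoid quoting the existence of the subordinator, the only point requiring care is deducing that $e^{-at^{\gamma}}$ is the Laplace transform of a probability measure on $(0,\infty)$—this is where Bernstein's theorem (plus the fact that $t\mapsto t^{\gamma}$ is Bernstein and that $e^{-at^\gamma}\to 0$ as $t\to\infty$, which rules out an atom at $0$) does all the work.
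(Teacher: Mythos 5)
Your proof is correct and follows essentially the same approach as the paper's: realize $\nu_{\gamma,a}$ as the law of a scaled $\gamma$-stable subordinator (you take $a^{1/\gamma}S_1$ directly, whereas the paper takes $S_a$ and then invokes Laplace-transform uniqueness to deduce $S_a \overset{d}{=} a^{1/\gamma}S_1$), and compute the negative moment via the Gamma-integral representation of $u^{-\theta}$ plus Tonelli. The reorganization is purely cosmetic; the key ideas coincide.
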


\begin{proof}

Let $S=(S_t)_{t\ge 0}$ be a \emph{$\gamma$-stable subordinator} (nondecreasing Lévy process) in the standard normalization
\[
\mathbb{E}\!\left[e^{-\lambda S_t}\right] \;=\; \exp\!\big(-t\,\lambda^{\gamma}\big), \qquad \lambda,t>0,\ \gamma\in(0,1).
\]
(For comparison, \cite{LalleySubordinators} writes $\mathbb{E}[e^{-\lambda X(t)}]=\exp\{-\tilde\gamma\,t\,\lambda^{\alpha}\}$ for some $\tilde\gamma\ge0$ and shows that with Lévy measure $\nu(dy)=y^{-1-\alpha}dy$ one has $\tilde\gamma=\Gamma(1-\alpha)/\alpha$; rescaling time yields the unit-scale form above. See also \cite[Chs.~1,\,3,\,5]{SSV12}.)

Fix $a>0$ and rename the Laplace parameter by $t\ge0$. Then
\[
e^{-a\,t^{\gamma}}
\;=\;
\mathbb{E}\!\left[e^{-\,t\,S_a}\right]
\;=\;
\int_{0}^{\infty} e^{-u t}\,\mathbb{P}(S_a\in du).
\]
By the Bernstein–Widder theorem (uniqueness of Laplace transforms of positive measures on $[0,\infty)$) (e.g. see \cite[Thm.~1.4]{SSV12}), we conclude \eqref{eq:bernstein-EN-eps} and, in addition, identifies $\nu_{\gamma,a}$ as the law of $S_a$.

In addition, for every \(\lambda>0\),
\begin{align*}
\mathbb{E}\big[e^{-\lambda S_a}\big]
&=e^{-a\,\lambda^\gamma}=\mathbb{E}\big[e^{-\lambda\,a^{1/\gamma}S_1}\big]
\end{align*}
By \emph{uniqueness of Laplace transforms on \([0,\infty)\)}, we conclude
\[
S_a\ \overset{d}{=}\ a^{1/\gamma} S_1.
\]

Hence, for any bounded measurable test function \(\varphi:[0,\infty)\to\mathbb{R}\),
\begin{align*}
\int_{[0,\infty)} \varphi(u)\,\nu_{\gamma,a}(du)
&=\mathbb{E}\big[\varphi(S_a)\big]
=\mathbb{E}\big[\varphi\!\big(a^{1/\gamma}S_1\big)\big]
=\int_{[0,\infty)} \varphi\!\big(a^{1/\gamma}s\big)\,\nu_{\gamma,1}(ds).
\end{align*}

In particular, \(\nu_{\gamma,a}(B)=\nu_{\gamma,1}\!\big(B/a^{1/\gamma}\big)\) for all Borel sets \(B\subset[0,\infty)\), which is precisely \eqref{eq:scale-EN-eps}.

Let \(\theta>0\). For every \(x>0\), we have (see the Gamma distribution):

\[
x^{-\theta}=\frac{1}{\Gamma(\theta)}\int_{0}^{\infty} s^{\theta-1} e^{-s x}\,ds.
\]

Apply this with \(x=S_1\) and use Tonelli’s theorem (the integrand is nonnegative) to justify the exchange of expectation and integral:
\begin{align*}
\int_{(0,\infty)} u^{-\theta}\,\nu_{\gamma,1}(du)&=\mathbb{E}\big[S_1^{-\theta}\big]
=\mathbb{E}\!\left[\frac{1}{\Gamma(\theta)}\int_{0}^{\infty} s^{\theta-1} e^{-s S_1}\,ds\right]
=\frac{1}{\Gamma(\theta)}\int_{0}^{\infty} s^{\theta-1}\,\mathbb{E}\big[e^{-s S_1}\big]\,ds\\
&=\frac{1}{\Gamma(\theta)}\int_{0}^{\infty} s^{\theta-1} e^{-s^\gamma}\,ds
\qquad\text{since }\mathbb{E}\big[e^{-s S_1}\big]=e^{-s^\gamma}.
\end{align*}
The integral is finite for all \(\theta>0\): near \(0\) the integrand behaves like \(s^{\theta-1}\), which is integrable on \((0,1]\), and for \(s\to\infty\) the factor \(e^{-s^\gamma}\) decays super-exponentially. 
With the change of variables \(v=s^\gamma\) (i.e., \(s=v^{1/\gamma}\), \(ds=(1/\gamma)\,v^{1/\gamma-1}dv\)), we obtain
\[
\int_{0}^{\infty} s^{\theta-1} e^{-s^\gamma}\,ds
=\frac{1}{\gamma}\int_{0}^{\infty} v^{\theta/\gamma-1} e^{-v}\,dv
=\frac{1}{\gamma}\,\Gamma\!\left(\frac{\theta}{\gamma}\right).
\]
Therefore,
\[
\int_{(0,\infty)} u^{-\theta}\,\nu_{\gamma,1}(du)=\mathbb{E}\big[S_1^{-\theta}\big]
=\frac{1}{\gamma}\,\frac{\Gamma(\theta/\gamma)}{\Gamma(\theta)}.
\]
which is precisely \eqref{eq:neg-moment-EN-eps}. 

\end{proof}

\begin{lemma}\label{lem:potter-EN-eps}
If $L$ is slowly varying, then for every $\varepsilon\in(0,1)$ and $C>1$ there exists $X(\varepsilon,C)\ge 1$ such that, for all $x\ge X(\varepsilon,C)$ and $y>0$,
\begin{equation}\label{eq:potter-EN-eps}
\frac{L(xy)}{L(x)}\ \le\ C\,\max\{y^\varepsilon,y^{-\varepsilon}\}.
\end{equation}
Moreover, for every compact interval $0<a\le y\le b<\infty$,
\begin{equation}\label{eq:UCT-EN-eps}
\lim_{x\to\infty}\ \sup_{y\in[a,b]}\ \left|\frac{L(xy)}{L(x)}-1\right|=0.
\end{equation}
\end{lemma}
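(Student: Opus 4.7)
The plan is to deduce both statements from the Karamata representation theorem, which asserts that any slowly varying $L$ admits, for $x$ large enough, the representation
\[
L(x) \;=\; c(x)\,\exp\!\left(\int_{1}^{x} \frac{\epsilon(t)}{t}\,dt\right),
\]
with $c(x) \to c \in (0,\infty)$ and $\epsilon(t) \to 0$ as $t \to \infty$. This is a classical result from Karamata theory (Bingham, Goldie and Teugels, \emph{Regular Variation}, Theorem~1.3.1) and is precisely the tool applied in \cite{JGF}.

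For the Potter bound \eqref{eq:potter-EN-eps}, I would split into $y \ge 1$ and $0 < y < 1$. In the former case the representation gives
\[
\frac{L(xy)}{L(x)} \;=\; \frac{c(xy)}{c(x)}\,\exp\!\left(\int_{x}^{xy} \frac{\epsilon(t)}{t}\,dt\right).
\]
Given $\varepsilon \in (0,1)$ and $C>1$, I would choose $T_1$ so that $|\epsilon(t)| \le \varepsilon$ for every $t \ge T_1$; the exponential factor is then bounded by $y^{\varepsilon}$ whenever $x \ge T_1$. Next I would pick $T_2$ such that $c(xy)/c(x) \le C$ for every $x \ge T_2$, which is possible because $c(u) \to c > 0$. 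Setting $X(\varepsilon,C) := T_1 \vee T_2$ yields $L(xy)/L(x) \le C\,y^{\varepsilon}$. The case $0 < y < 1$ is handled symmetrically, reversing the limits in the integral and using the bound $|\log y|$; this produces the $y^{-\varepsilon}$ branch, and the two cases combine into \eqref{eq:potter-EN-eps}.

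For the uniform convergence statement \eqref{eq:UCT-EN-eps}, I would again take logarithms in the same representation. For $y \in [a,b]$, the integration range is contained in $[x(a \wedge 1),\, x(b \vee 1)]$, so
\[
\left|\log \frac{L(xy)}{L(x)}\right| \;\le\; \left|\log \frac{c(xy)}{c(x)}\right| + \Big(\sup_{t \ge x(a \wedge 1)}|\epsilon(t)|\Big)\,\max\{|\log a|,\,|\log b|\}.
\]
Both summands vanish as $x \to \infty$ uniformly in $y \in [a,b]$: the first because $c(u) \to c \in (0,\infty)$ and both $x$ and $xy$ tend to infinity with $x$, and the second because $\epsilon(t) \to 0$ while the log factor depends only on $[a,b]$. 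Exponentiating yields \eqref{eq:UCT-EN-eps}. The main technical obstacle is the Karamata representation itself, whose standard proof (for measurable $L$) reduces multiplicative slow variation to an additive Cauchy-type functional equation and invokes Egorov's theorem to promote pointwise to locally uniform convergence. Since this machinery is a classical auxiliary used identically in \cite{JGF}, I would cite it rather than reproduce the derivation.
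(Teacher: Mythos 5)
Your proof is correct, and it takes a genuinely different route from the paper's. You derive both statements directly from the Karamata representation theorem, $L(x)=c(x)\exp\bigl(\int_1^x \epsilon(t)\,t^{-1}\,dt\bigr)$ with $c(\cdot)\to c\in(0,\infty)$ and $\epsilon(\cdot)\to 0$, essentially re-proving the Potter bound from scratch; this is unified and self-contained (a single cited theorem yields both parts) at the cost of invoking the heavier representation machinery. The paper instead argues modularly: for $y\ge 1$ it simply quotes the Potter bound itself (BGT Thm.~1.5.6); for $0<y<1$ it observes that $g(x):=L(x)x^{\varepsilon}$ is regularly varying of index $\varepsilon$ and applies the uniform convergence theorem for positive-index regularly varying functions (BGT Thm.~1.5.2) to $g$ on $(0,1]$, giving $L(xy)/L(x)=g(xy)/\bigl(g(x)y^{\varepsilon}\bigr)\le(1+\delta)\,y^{-\varepsilon}$; and for \eqref{eq:UCT-EN-eps} it quotes the basic UCT for slowly varying functions (BGT Thm.~1.2.1). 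One caveat both proofs share and neither flags: \eqref{eq:potter-EN-eps} stated for \emph{all} $y>0$ is not a consequence of slow variation at $\infty$ alone, since for $y$ small enough $xy$ may drop below the range on which the Karamata representation (equivalently, the local-boundedness hypothesis in BGT Thm.~1.5.2) is guaranteed. This is harmless here because in every application the argument of $L$ is $\ge 1$, but in your ``handled symmetrically'' step for $0<y<1$ you should note explicitly that $c(xy)/c(x)$ stays controlled only because $xy$ remains in the representation's domain (or because $c$ is bounded on the relevant compact sets).
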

\begin{proof}

Since $L$ is slowly varying at $\infty$, the Uniform Convergence Theorem
(see, e.g., \cite[Thm.~1.2.1]{BGT1987}) yields that for every compact
$K\subset(0,\infty)$,
\[
\sup_{\lambda\in K}\Bigg|\frac{L(\lambda x)}{L(x)}-1\Bigg|\xrightarrow[x\to\infty]{}0.
\]

We now prove \eqref{eq:potter-EN-eps}. We use Potter bounds for \emph{regularly varying} functions (see \cite[Thm.~1.5.6]{BGT1987}): for every $\varepsilon\in(0,1)$ and $C> 1$ there exists a constant \(X_0(\varepsilon,C)\) such that
\begin{equation}\label{eq:potter-RV}
\frac{L(xy)}{L(x)}\ \le C\,y^{\varepsilon}, \, y\ge 1, \qquad (x\ge X_0(\varepsilon,C)).
\end{equation}

Moreover, let \(g(x):=L(x)x^{\varepsilon}\); then \(g\) is regularly varying with index \(\varepsilon\), and the Uniform Convergence Theorem
(see, e.g., \cite[Thm.~1.5.2]{BGT1987}) yields that for every \(b>0\)
\[
\sup_{\lambda\in (0,b]}\Bigg|\frac{g(\lambda x)}{g(x)}-\lambda^{\varepsilon}\Bigg|\xrightarrow[x\to\infty]{}0.
\]

In particular, take \(b=1\). Now, given \(\delta>0\), there exists \(X_1(\delta)>0\) such that if \(x\geq X_1(\delta)\), then 
\[
\frac{L(xy)}{L(x)}=\frac{g(xy)}{g(x)y^{\varepsilon}}\leq \delta  y^{-\varepsilon}+1\leq(\delta+1)y^{-\varepsilon}
\]
for all \(y \in (0,1]\). Then, choosing \(\delta>0\) such that \(1+\delta\leq C\), we obtain that \eqref{eq:potter-EN-eps} holds.

\end{proof}

\begin{lemma}\label{lem:tail-tau1-EN-eps}
Let $(S_k)_{k\ge0}$ be the simple symmetric random walk on $\mathbb Z$ with $S_0=0$ and
\[
\tau_1:=\inf\{k\ge1:\ S_k=1\}.
\]
Then
\begin{equation}\label{tau1_t}
\PP(\tau_1>t)\ \sim\ \sqrt{\tfrac{2}{\pi}}\;t^{-1/2}\qquad (t\to\infty).
\end{equation}
\end{lemma}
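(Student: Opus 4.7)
The plan is to derive the tail asymptotic in two steps: first, turn the generating function identity for $\tau_1$ (recorded in \eqref{sumindp}) into a Laplace-transform asymptotic as $u\downarrow 0$; second, invoke Karamata's Tauberian theorem to transfer this asymptotic into the desired tail behavior.

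For the first step, starting from $\EE[s^{\tau_1}]=f(s)=(1-\sqrt{1-s^2})/s$ and setting $s=e^{-u}$, I would use Lemma \ref{lem:two-sided-exp-EN} (which gives $\log f(e^{-u})=-\sqrt{2u}+O(u^{3/2})$ as $u\downarrow 0$) together with a one-term Taylor expansion of the exponential to obtain
\[
1-\EE\bigl[e^{-u\tau_1}\bigr]\;=\;1-f(e^{-u})\;\sim\;\sqrt{2}\,u^{1/2}\qquad(u\downarrow 0).
\]

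For the second step, I would apply Karamata's Tauberian theorem for Laplace transforms in the form: if $X$ is a nonnegative random variable whose tail is monotone and $1-\EE[e^{-uX}]\sim C\,u^{\alpha}$ as $u\downarrow 0$ for some $\alpha\in(0,1)$ and $C>0$, then $\PP(X>t)\sim \tfrac{C}{\Gamma(1-\alpha)}\,t^{-\alpha}$ as $t\to\infty$ (see, e.g., \cite[Thm.~1.7.1']{BGT1987}). Specializing to $\alpha=1/2$ and $C=\sqrt{2}$ yields
\[
\PP(\tau_1>t)\;\sim\;\frac{\sqrt{2}}{\Gamma(1/2)}\,t^{-1/2}\;=\;\sqrt{\tfrac{2}{\pi}}\,t^{-1/2},
\]
using $\Gamma(1/2)=\sqrt{\pi}$, which is precisely \eqref{tau1_t}.

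The only delicate point is invoking the correct form of the Tauberian theorem (the version that relates the behavior of $1-\hat{F}(u)$ near $u=0$ to the tail of $F$ at infinity, rather than the more familiar Karamata statement at infinity); once one verifies monotonicity of $t\mapsto \PP(\tau_1>t)$ (immediate) and identifies the slowly varying factor (here the constant $\sqrt{2}$), the constants match automatically. A fully elementary alternative would bypass Tauberian theory via the reflection/ballot identity $\PP(\tau_1=2k-1)=\tfrac{1}{2k-1}\binom{2k-1}{k}2^{-(2k-1)}$, Stirling's formula $\PP(\tau_1=2k-1)\sim (2\sqrt{\pi})^{-1}k^{-3/2}$, and the tail comparison $\sum_{k>n}k^{-3/2}\sim 2n^{-1/2}$, followed by the substitution $t=2n$; however, the generating-function route dovetails more cleanly with Lemma \ref{lem:two-sided-exp-EN}, which is already part of the toolkit developed above.
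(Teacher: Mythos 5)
Your proof is correct, but it takes a genuinely different route from the paper's. The paper's own proof is essentially a citation plus a symmetry observation: it invokes \cite[Prop.~5.1.2]{LawlerLimic2010} together with translation-by-one and reflection to obtain $\PP(\tau_1>2n)\sim 1/\sqrt{\pi n}$, from which the stated form follows on substituting $t=2n$. Your argument is Tauberian and self-contained within the paper's toolkit: you extract the Laplace-transform asymptotic $1-\EE[e^{-u\tau_1}]\sim\sqrt{2}\,u^{1/2}$ from the explicit generating function $f(s)=(1-\sqrt{1-s^2})/s$ via Lemma~\ref{lem:two-sided-exp-EN}, then invert it by Karamata's Tauberian theorem. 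This is exactly the device the paper itself uses two lemmas later in Lemma~\ref{lem:tauber-Tgamma-EN-eps}, so your route is arguably more uniform with the rest of the appendix and avoids importing a combinatorial/local-limit fact from outside. One small bibliographic point: BGT Theorem~1.7.1 (and its primed variant) is phrased for $\hat U(s)$ and $U(t)$, not directly for $1-\hat U(s)$ versus the tail $\overline F(t)$; to apply it cleanly you should either pass through the identity $1-\EE[e^{-sX}]=s\int_0^\infty e^{-st}\overline F(t)\,dt$ and then use the monotone density theorem (monotonicity of $\overline F$ gives the Tauberian condition, as you note), or cite the distribution-tail version directly, e.g.\ \cite[Thm.~8.1.6]{BGT1987} or the Feller reference the paper already uses in Lemma~\ref{lem:tauber-Tgamma-EN-eps}. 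Either way the constant $\sqrt{2/\pi}$ comes out via $\Gamma(1/2)=\sqrt{\pi}$, and the elementary ballot-formula alternative you sketch is also valid.
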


\begin{proof}
By \cite[Prop.~5.1.2]{LawlerLimic2010} and by the spatial symmetry (translation by $1$ and reflection) of the simple symmetric random walk, we have that:
\[
\PP(\tau_1>2n)\ \sim\ \frac{1}{\sqrt{\pi\,n}}
\qquad (n\to\infty).
\]
Hence \eqref{tau1_t} follows.
\end{proof}

\begin{lemma}\label{lem:tauber-Tgamma-EN-eps}
Let $\gamma>1$. Then for every $\varepsilon\in(0,1)$ there exist $a_\varepsilon\in(0,1)$ and positive constants $c_1(\varepsilon)\le c_2(\varepsilon)$ such that, for all $a\in(0,a_\varepsilon]$,
\begin{equation}\label{eq:Lap-two-sided-EN-eps}
c_1(\varepsilon)\,a^{1/(2\gamma)}\ \le\ 1-\EE\!\big[e^{-a\,\tau_1^\gamma}\big]\ \le\ c_2(\varepsilon)\,a^{1/(2\gamma)}.
\end{equation}
Moreover,
\begin{equation}\label{eq:Lap-limit-EN-eps}
\lim_{a\downarrow 0}\ \frac{1-\EE(e^{-a \tau_1^\gamma})}{a^{1/(2\gamma)}}\ =\ \Gamma\!\Big(1-\frac{1}{2\gamma}\Big)\sqrt{\frac{2}{\pi}}.
\end{equation}
\end{lemma}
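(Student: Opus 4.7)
The plan is to derive both claims from a single Abelian-type asymptotic for the Laplace transform of $\tau_1^{\gamma}$ using the tail estimate of Lemma~\ref{lem:tail-tau1-EN-eps}. The starting point is the elementary identity valid for any nonnegative random variable $X$,
\[
1-\EE[e^{-aX}]\;=\;a\int_{0}^{\infty}e^{-at}\,\PP(X>t)\,dt,
\]
obtained by writing $1-e^{-aX}=a\int_{0}^{X}e^{-at}\,dt$ and applying Tonelli. Applied to $X=\tau_1^{\gamma}$ this yields
\[
1-\EE\!\big[e^{-a\,\tau_1^{\gamma}}\big]\;=\;a\int_{0}^{\infty} e^{-at}\,\PP(\tau_1>t^{1/\gamma})\,dt,
\]
and Lemma~\ref{lem:tail-tau1-EN-eps} gives $\PP(\tau_1>t^{1/\gamma})\sim\sqrt{2/\pi}\,t^{-1/(2\gamma)}$ as $t\to\infty$, so the integrand has a regularly varying tail of index $-1/(2\gamma)$.

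Next I would perform the scaling substitution $u=at$ to get
\[
1-\EE\!\big[e^{-a\,\tau_1^{\gamma}}\big]\;=\;\int_{0}^{\infty}e^{-u}\,\PP\!\big(\tau_1>(u/a)^{1/\gamma}\big)\,du,
\]
multiply by $a^{-1/(2\gamma)}$, and pass to the limit as $a\downarrow 0$ by dominated convergence. The pointwise limit of the integrand is $\sqrt{2/\pi}\,e^{-u}u^{-1/(2\gamma)}$; integrating gives the gamma-function identity
\[
\int_{0}^{\infty}e^{-u}u^{-1/(2\gamma)}\,du\;=\;\Gamma\!\Big(1-\frac{1}{2\gamma}\Big),
\]
which is finite because $\gamma>1$ forces $1/(2\gamma)\in(0,1/2)$. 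This directly produces the limit \eqref{eq:Lap-limit-EN-eps}. The two-sided bound \eqref{eq:Lap-two-sided-EN-eps} is then immediate: given $\varepsilon\in(0,1)$, choose $a_\varepsilon$ small enough that the ratio $(1-\EE[e^{-a\tau_1^\gamma}])/a^{1/(2\gamma)}$ lies within $\varepsilon$ of $\Gamma(1-1/(2\gamma))\sqrt{2/\pi}$, and set
\[
c_1(\varepsilon):=(1-\varepsilon)\,\Gamma\!\Big(1-\tfrac{1}{2\gamma}\Big)\sqrt{\tfrac{2}{\pi}},
\qquad
c_2(\varepsilon):=(1+\varepsilon)\,\Gamma\!\Big(1-\tfrac{1}{2\gamma}\Big)\sqrt{\tfrac{2}{\pi}},
\]
both positive since $1-1/(2\gamma)\in(1/2,1)$ keeps $\Gamma$ bounded away from $0$.

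The one delicate step is the domination needed to push the $a\downarrow 0$ limit inside the integral, since $a^{-1/(2\gamma)}\PP(\tau_1>(u/a)^{1/\gamma})$ is controlled by the asymptotic tail only for $u/a$ large. This is routine but requires care: by Lemma~\ref{lem:tail-tau1-EN-eps} there exists $T_0$ and $K$ with $\PP(\tau_1>s)\le K s^{-1/2}$ for all $s\ge T_0$, and using the trivial bound $\PP(\tau_1>s)\le 1$ on the complementary range one obtains a single envelope $\PP(\tau_1>t^{1/\gamma})\le K'(1+t)^{-1/(2\gamma)}$, valid for all $t>0$, which translates into $a^{-1/(2\gamma)}\PP(\tau_1>(u/a)^{1/\gamma})\le K'\,u^{-1/(2\gamma)}$ uniformly in $a\in(0,1]$ and $u>0$; integrability of $e^{-u}u^{-1/(2\gamma)}$ on $(0,\infty)$ (again using $1/(2\gamma)<1$) then legitimises the dominated convergence. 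This is really the only place where the restriction $\gamma>1$ is used — it guarantees that the Gamma factor is finite and the envelope is integrable at the origin.
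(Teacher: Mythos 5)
Your proof is correct and follows the same overall strategy as the paper: both start from the integral identity $1-\EE[e^{-aX}]=a\int_0^\infty e^{-at}\PP(X>t)\,dt$, feed in the tail asymptotic $\PP(\tau_1>t)\sim\sqrt{2/\pi}\,t^{-1/2}$ from Lemma~\ref{lem:tail-tau1-EN-eps}, and read off the leading Laplace asymptotic of index $\rho=1/(2\gamma)$. The one place you diverge is the final limiting step: the paper invokes Karamata's Tauberian theorem (cited from Feller) to pass from the regularly varying tail to the asymptotic $1-\EE(e^{-a\tau_1^\gamma})\sim\sqrt{2/\pi}\,\Gamma(1-\rho)\,a^\rho$, whereas you prove the corresponding Abelian limit directly by the scaling substitution $u=at$ followed by dominated convergence, supplying an explicit envelope $a^{-1/(2\gamma)}\PP(\tau_1>(u/a)^{1/\gamma})\le K'(a+u)^{-1/(2\gamma)}\le K'u^{-1/(2\gamma)}$. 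Your domination argument is correct and makes the lemma self-contained, at the modest cost of re-deriving a standard special case of the Tauberian machinery. One small inaccuracy in the commentary: you claim the restriction $\gamma>1$ is ``really the only place'' used to keep the Gamma factor finite and the envelope integrable at the origin, but in fact $\gamma>1/2$ already suffices for both; the hypothesis $\gamma>1$ is required elsewhere in the paper (superadditivity of $x\mapsto x^\gamma$ in Lemma~\ref{lem:convex-ineq-EN-eps}), not in this lemma. This does not affect the validity of your proof.
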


\begin{proof} Let \(\gamma>1\) and write \(\overline F_{\tau_1^\gamma}(y):=\PP(\tau_1^\gamma>y)\). Define \(\rho:=1/(2\gamma)\in(0,1/2)\) and \(c:=\sqrt{2/\pi}\). By Lemma~\ref{lem:tail-tau1-EN-eps}, \(\PP(\tau_1>t)\sim c\,t^{-1/2}\) as \(t\to\infty\). Hence
\begin{equation}\label{tailtau_1}
\overline F_{\tau_1^\gamma}(y)=\PP\!\big(\tau_1>y^{1/\gamma}\big)\sim c\,y^{-\rho}\qquad(y\to\infty),
\end{equation}
i.e., the tail of \(\tau_1^\gamma\) is regularly varying with index \(-\rho\) (standard terminology as in \cite[Ch.~1]{BGT1987}). For any nonnegative random variable,
\begin{equation}\label{TTL}
1-\EE\!\big(e^{-a\tau_1^\gamma}\big)\;=\;a\int_{0}^{\infty} e^{-ay}\,\overline F_{\tau_1^\gamma}(y)\,dy,\qquad a>0,
\end{equation}
see, e.g., the Laplace–Stieltjes integration-by-parts identity in equation (2.7) of \cite[Ch.~XIII]{Feller1971}.

Since \(F_{\tau_1^\gamma}(y)\) is nonincreasing, by (\ref{tailtau_1}) and Karamata’s Tauberian theorem for Laplace transforms applied to the regularly varying tail in (\ref{TTL}), we obtain
\[
1-\EE\!\big(e^{-a\tau_1^\gamma}\big)\ \sim\ c\,\Gamma(1-\rho)\,a^{\rho}\qquad(a\downarrow0),
\]
see \cite[Th.4,Ch. XIII.5]{Feller68}. With \(\rho=1/(2\gamma)\) and \(c=\sqrt{2/\pi}\), this gives the limit \eqref{eq:Lap-limit-EN-eps}.
In particular, for every \(\varepsilon\in(0,1)\) there exist \(a_\varepsilon\in(0,1)\) and constants \(c_1(\varepsilon),c_2(\varepsilon)>0\) such that, for all \(a\in(0,a_\varepsilon]\),
\[
c_1(\varepsilon)\,a^{1/(2\gamma)}\ \le\ 1-\EE\!\big(e^{-a\tau_1^\gamma}\big)\ \le\ c_2(\varepsilon)\,a^{1/(2\gamma)},
\]
which is \eqref{eq:Lap-two-sided-EN-eps}.
\end{proof}

\begin{lemma}\label{lem:convex-ineq-EN-eps}
For $x_i\ge 0$ and $\gamma> 1$,
\begin{equation}\label{eq:superadd}
\Big(\sum_{i=1}^n x_i\Big)^{\gamma}\ \ge\ \sum_{i=1}^n x_i^{\gamma}.
\end{equation}
Hence, if $T_i$ are i.i.d.\ nonnegative random variables and $a\ge0$,
\begin{equation}\label{eq:prod-bound-EN-eps}
\EE\!\big[e^{-a(\sum_{i=1}^n T_i)^\gamma}\big]
\ \le\ \EE\!\big[e^{-a\sum_{i=1}^n T_i^\gamma}\big]
\ =\ \prod_{i=1}^n \EE\!\big[e^{-a T_i^\gamma}\big].
\end{equation}
\end{lemma}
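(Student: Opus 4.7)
The plan is to prove the two parts in sequence, with part (2) being a direct consequence of part (1) via pointwise application.

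For the inequality \eqref{eq:superadd}, I would use a normalization/homogeneity argument, which avoids induction. If every $x_i$ is zero the claim is trivial, so I assume $s:=\sum_{i=1}^n x_i>0$ and set $y_i:=x_i/s\in[0,1]$, so that $\sum_{i=1}^n y_i=1$. Because $\gamma>1$ and each $y_i\in[0,1]$, one has $y_i^\gamma\le y_i$ (the map $t\mapsto t^\gamma$ lies below the diagonal on $[0,1]$). Summing gives $\sum_{i=1}^n y_i^\gamma\le \sum_{i=1}^n y_i=1$. Multiplying both sides by $s^\gamma$ and using homogeneity $s^\gamma y_i^\gamma=x_i^\gamma$ yields \eqref{eq:superadd}. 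An alternative would be induction reducing to $n=2$, where $\phi(y):=(x+y)^\gamma-y^\gamma$ has $\phi'(y)=\gamma[(x+y)^{\gamma-1}-y^{\gamma-1}]\ge 0$ since $t\mapsto t^{\gamma-1}$ is nondecreasing for $\gamma\ge 1$; this gives $\phi(y)\ge\phi(0)=x^\gamma$. Either route is elementary; I would pick the homogeneity version because it is one line.

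For \eqref{eq:prod-bound-EN-eps}, I apply \eqref{eq:superadd} pointwise ($\omega$-by-$\omega$) to the nonnegative random variables $T_i(\omega)$, obtaining
\[
\Bigl(\sum_{i=1}^n T_i\Bigr)^{\!\gamma}\ \ge\ \sum_{i=1}^n T_i^{\gamma}\qquad\text{a.s.}
\]
Multiplying by $-a\le 0$ reverses the inequality, and the monotonicity of $x\mapsto e^x$ gives $e^{-a(\sum T_i)^\gamma}\le e^{-a\sum T_i^\gamma}$ a.s. Taking expectations preserves the inequality, and the factorization
\[
\EE\Bigl[e^{-a\sum_{i=1}^n T_i^\gamma}\Bigr]=\EE\Bigl[\prod_{i=1}^n e^{-aT_i^\gamma}\Bigr]=\prod_{i=1}^n\EE\bigl[e^{-aT_i^\gamma}\bigr]
\]
follows from independence of the $T_i$ (hence of the bounded nonnegative functionals $e^{-aT_i^\gamma}$), yielding \eqref{eq:prod-bound-EN-eps}.

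There is no serious obstacle: both steps are standard. The only thing worth being careful about is stating independence of $T_i^\gamma$ as a consequence of independence of $T_i$ (a measurable function of independent variables remains independent), and noting that the i.i.d.\ assumption is stronger than needed—mere independence suffices—though the lemma is stated for i.i.d.\ because that is how it is applied in the proof of Proposition~\ref{prop:upper-gamma>1-EN-eps}.
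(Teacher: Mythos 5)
Your proof is correct. The second half (pointwise application, monotonicity of the exponential, and factorization by independence) is essentially identical to the paper's. For the superadditivity inequality \eqref{eq:superadd}, however, your primary argument is genuinely different from the one the paper uses: you normalize by $s=\sum_i x_i$, reduce to the elementary fact that $t\mapsto t^\gamma$ lies below the diagonal on $[0,1]$ when $\gamma>1$, and then rescale by homogeneity — a single step that handles all $n$ simultaneously. The paper instead proves the $n=2$ case by differentiating $g_b(a)=(a+b)^\gamma-a^\gamma-b^\gamma$ in $a$ and observing $g_b'\ge 0$ since $a\mapsto a^{\gamma-1}$ is nondecreasing, then inducts on $n$; this is exactly the ``alternative'' route you sketched. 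Your homogeneity argument is arguably more economical because it needs no induction, while the paper's derivative computation makes the role of convexity (via monotonicity of $t\mapsto t^{\gamma-1}$) more visible. Both are valid and elementary; your closing remark that full independence (rather than the i.i.d.\ hypothesis) suffices for the factorization is also correct.
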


\begin{proof}We first prove \eqref{eq:superadd} for \(n=2\). Fix \(a\ge 0\) and, for \(b\ge 0\), set
\[
g_b(a):=(a+b)^\gamma-a^\gamma-b^\gamma .
\]
Since
\[
g_b'(a)=\gamma\big((a+b)^{\gamma-1}-a^{\gamma-1}\big)\ge 0
\]
(because \(\gamma\ge 1\) and the map \(a\mapsto a^{\gamma-1}\) is nondecreasing), it follows that \(g_b(a)\ge g_b(0)=b^\gamma-b^\gamma=0\). By symmetry, the same conclusion holds upon interchanging \(a\) and \(b\). An induction on \(n\) then yields \eqref{eq:superadd}.

Applying \eqref{eq:superadd} pointwise with \(x_i=T_i(\omega)\ge 0\) and multiplying by \(-a\le 0\),
\[
-a\Big(\sum_i T_i\Big)^\gamma \le -a\sum_i T_i^\gamma .
\]
Since the exponential function preserves order,
\[
e^{-a(\sum_i T_i)^\gamma}\ \le\ e^{-a\sum_i T_i^\gamma}\ =\ \prod_i e^{-a T_i^\gamma}.
\]
Taking expectations and using independence to factor the product, we obtain \eqref{eq:prod-bound-EN-eps}.

\end{proof}

\begin{lemma}\label{lem:inv-principle-EN-eps}
Let $(S_k)_{k\ge0}$ be the simple symmetric random walk on $\mathbb Z$ with $S_0=0$. For any $c_0>0$, there exists $n_0\in\mathbb N$ such that for all $n\ge n_0$,
\begin{equation}\label{eq:event-typical-EN-eps}
\PP\big(\tau_n< c_0\,n^2\big)\ \ge\ \theta,
\end{equation}
where \(\theta:=\ \tfrac12\big(1-\Phi(1/\sqrt{c_0})\big)\in(0,1)\) and $\Phi$ is the standard Gaussian cdf.
\end{lemma}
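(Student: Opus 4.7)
The plan is to reduce the hitting-time event to a one-time-marginal event of the random walk, and then invoke the central limit theorem. The key observation is that because the simple symmetric random walk has $\pm 1$ steps and starts at $0$, the event $\{S_m\ge n\}$ at any deterministic time $m\in\mathbb N$ implies $\tau_n\le m$: the walk cannot skip the level $n$. Hence, if we choose $m=m(n):=\lfloor c_0 n^2\rfloor-1$ (so that $m(n)<c_0 n^2$ for every $n\ge 1$), we obtain the inclusion
\[
\{S_{m(n)}\ge n\}\ \subseteq\ \{\tau_n\le m(n)\}\ \subseteq\ \{\tau_n<c_0 n^2\},
\]
which immediately yields the lower bound $\PP(\tau_n<c_0 n^2)\ge \PP(S_{m(n)}\ge n)$.

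Next I would apply the classical central limit theorem to $S_{m(n)}$. Since the increments of the simple symmetric random walk are i.i.d.\ with mean $0$ and variance $1$, we have $S_{m(n)}/\sqrt{m(n)}\Rightarrow Z\sim \mathcal N(0,1)$. Moreover, $m(n)=c_0 n^2\bigl(1+o(1)\bigr)$, so $n/\sqrt{m(n)}\to 1/\sqrt{c_0}$. The Portmanteau/CLT for the half-line functional then gives
\[
\PP\!\bigl(S_{m(n)}\ge n\bigr)\ =\ \PP\!\Bigl(\tfrac{S_{m(n)}}{\sqrt{m(n)}}\ge \tfrac{n}{\sqrt{m(n)}}\Bigr)\ \xrightarrow[n\to\infty]{}\ 1-\Phi\!\bigl(1/\sqrt{c_0}\bigr).
\]

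Finally, note that the limiting value $1-\Phi(1/\sqrt{c_0})$ equals $2\theta$, which is strictly larger than $\theta>0$. Therefore there exists $n_0\in\mathbb N$ such that $\PP(S_{m(n)}\ge n)\ge \theta$ for every $n\ge n_0$, and the previous inclusion yields $\PP(\tau_n<c_0 n^2)\ge \theta$, completing the proof. No serious obstacle arises; the only minor subtlety is the strict-versus-weak inequality in $\tau_n<c_0 n^2$ (which forces us to take $m(n)=\lfloor c_0 n^2\rfloor-1$ rather than $\lfloor c_0 n^2\rfloor$, to guarantee the inclusion in the event $\{\tau_n<c_0 n^2\}$ for every $n$, regardless of whether $c_0 n^2$ happens to be integer). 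One could alternatively invoke Donsker's invariance principle together with the reflection principle for Brownian motion to obtain the sharper limit $2\bigl(1-\Phi(1/\sqrt{c_0})\bigr)=4\theta$, but this refinement is not needed since we only require a lower bound of $\theta$.
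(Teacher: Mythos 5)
Your proof is correct and follows essentially the same route as the paper's: both reduce the event $\{\tau_n<c_0n^2\}$ to the one-time event $\{S_m\ge n\}$ at a deterministic time $m\approx c_0 n^2$, and then pass to the Gaussian limit to show the probability converges to $1-\Phi(1/\sqrt{c_0})=2\theta>\theta$. The only difference is cosmetic: the paper invokes the quantitative Berry--Esseen bound, while you use the plain CLT together with a continuity argument for the threshold $n/\sqrt{m(n)}\to 1/\sqrt{c_0}$; since only an eventual lower bound is needed, your more elementary route is equally valid, and your choice $m(n)=\lfloor c_0 n^2\rfloor-1$ correctly handles the strict inequality when $c_0 n^2$ is an integer.
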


\begin{proof} Let $M_m:=\max_{0\le k\le m} S_k$. By definition of the hitting time,
\begin{equation}\label{MaxS_n}
\PP(\tau_n< m)=\PP(M_m> n)\ge \PP(S_m> n).
\end{equation}

\smallskip
Write $S_m=\sum_{i=1}^m X_i$, where $\{X_i\}_{i\in \mathbb{N}}$ is an i.i.d. collection of random variables with \ $X_i\in\{\pm1\}$, $\EE X_i=0$, $\text{Var}(X_i)=1$. By the Berry–Esseen theorem, for all $x\in\mathbb R$ and $m\in\mathbb N$,
\[
\Big|\PP\!\Big(\frac{S_m}{\sqrt m}\le x\Big)-\Phi(x)\Big|\ \le\ \frac{3}{\sqrt m},
\]
see, e.g., \cite[Thm.~3.4.17]{DurrettPTE}. 

Set $x_n:=n/\sqrt{m}$ with $m=\lfloor c_0 n^2\rfloor$. The uniform Berry–Esseen bound
(see Durrett, Thm.~3.4.17) gives, for all $x\in\mathbb R$,
\[
\sup_{x\in\mathbb R}\Big|\PP\!\Big(\frac{S_m}{\sqrt m}\le x\Big)-\Phi(x)\Big|
\;\le\;\frac{3}{\sqrt m}.
\]
Evaluating at $x=x_n$,
\[
\PP(S_m> n)
=1-\PP\!\Big(\frac{S_m}{\sqrt m}\le x_n\Big)
\ \ge\ 1-\Phi(x_n)-\frac{3}{\sqrt m}.
\]
Since $x_n\to 1/\sqrt{c_0}$ and $\Phi$ is globally Lipschitz with constant
\(1/\sqrt{2\pi}\), we have
\[
\big|\Phi(x_n)-\Phi(1/\sqrt{c_0})\big|\ \le\ \frac{|x_n-1/\sqrt{c_0}|}{\sqrt{2\pi}}.
\]
Moreover, writing $m=\lfloor c_0 n^2\rfloor=c_0 n^2-r_n$ with $r_n\in[0,1)$,
\[
x_n=\frac{n}{\sqrt{c_0 n^2-r_n}}
=\frac{1}{\sqrt{c_0}}\cdot\frac{1}{\sqrt{1-r_n/(c_0 n^2)}}
\le \frac{1}{\sqrt{c_0}}\Big(1+\frac{2\,r_n}{c_0 n^2}\Big)
\le \frac{1}{\sqrt{c_0}}+\frac{2}{c_0^{3/2}n^2},
\]
for all $n$ large enough (use $1/\sqrt{1-t}\le 1+2t$ for $t\in[0,1/2]$). Hence
\[
\big|\Phi(x_n)-\Phi(1/\sqrt{c_0})\big|\ \le\ \frac{1}{\sqrt{2\pi}}\cdot \frac{2}{c_0^{3/2}n^2}.
\]
Combining the last two displays,
\[
\PP(S_m> n)\ \ge\ \big(1-\Phi(1/\sqrt{c_0})\big)\ -\ \frac{3}{\sqrt m}\ -\
\frac{2}{\sqrt{2\pi}\,c_0^{3/2}n^2}.
\]
Let
\[
\theta\ :=\ \tfrac12\big(1-\Phi(1/\sqrt{c_0})\big)\in(0,1).
\]
Choose $n_0$ so large that, for all $n\ge n_0$, \(\frac{3}{\sqrt m}\ +\ \frac{2}{\sqrt{2\pi}\,c_0^{3/2}n^2}\ \le\ \theta\). Then, for all $n\ge n_0$, \(\PP(S_m> n)\ \ge\ \theta\). Since $\{S_m> n\}\subseteq\{\tau_n< m\}$, we conclude that
\[
\PP(\tau_n< c_0 n^2)\ \ge\ \PP(S_m> n)\ \ge\ \theta
\ =\ \tfrac12\big(1-\Phi(1/\sqrt{c_0})\big),
\]
which is \eqref{eq:event-typical-EN-eps}.

\end{proof}

\begin{acks}[Acknowledgments]
G.O.C. was supported by the Coordenação de Aperfeiçoamento de Pessoal de Nível Superior - Brasil (CAPES) - Finance Code 001. F.P.M. and J.H.R.G. are  supported by FAPESP (grants 2023/13453-5 and 2025/03804-0, respectively).
\end{acks}


\end{document}